\numberwithin{equation}{section}
\newtheorem{theorem}{\bf Theorem}[section]
\newtheorem{corollary}[theorem]{\bf Corollary}
\newtheorem{proposition}[theorem]{\bf Proposition}
\newtheorem{lemma}[theorem]{\bf Lemma}
\newtheorem{definition}[theorem]{\bf Definition}
\newtheorem{definition-theorem}[theorem]{\bf Theorem-Definition}
\newtheorem{remark}[theorem]{\bf Remark}
\newtheorem{ex}[theorem]{Example}
\def\bR{\mathbb{R}}
\def\bC{\mathbb{C}}
\def\bQ{\mathbb{Q}}
\def\t{\mathfrak{t}}
\def\g{\mathfrak{g}}
\def\quott({/\! /}
\def\g{\mathfrak{g}}
\def\t{\frak{t}}
\def\s{\sigma}
\def\t{\tau}
\address[O.~Goertsches]{Department of Mathematics and Computer Science\\  Philipps-Universit\"at Marburg\\  Germany}
\address[S.~Hagh Shenas Noshari]{Institut f\"ur Geometrie und Topologie\\  Universit\"at Stuttgart\\  Germany}
\address[A.-L.~Mare]{Department of Mathematics and Statistics\\ University of Regina\\ Canada}
\email[]{goertsch@mathematik.uni-marburg.de}
\email[]{sam.noshari@mathematik.uni-stuttgart.de}
\email[]{mareal@math.uregina.ca}
\title[Equivariant cohomology of hyperpolar actions]
{On the equivariant cohomology of hyperpolar actions on symmetric spaces 
}
\author[O.~Goertsches]{Oliver Goertsches}
\author[S.~Hagh Shenas Noshari]{Sam Hagh Shenas Noshari}
\author[A.-L.~Mare ]{Augustin-Liviu Mare }
\date{\today}
\begin{document}
\begin{abstract}
We show that the equivariant cohomology of any hyperpolar action of a compact and connected Lie group
on a symmetric space of compact type is a Cohen-Macaulay ring. This generalizes some results previously obtained
 by the authors.
\end{abstract}


\maketitle
\section{Introduction}\label{intro}

When investigating smooth group actions on manifolds, an insightful topological invariant one associates to it
is its equivariant cohomology. (The coefficient ring throughout this paper is $\bQ$, unless otherwise specified.) 
This remains an effective tool even in cases when concrete descriptions are hard to obtain.
Geometric features  of the action are reflected by algebraic properties of its equivariant
cohomology. Equivariant formality is an example of such a property, 
which is relevant due to its simple definition  (see Sect.~\ref{sec:general})
and to the numerous situations when it is satisfied, such as Hamiltonian
actions of compact Lie groups on compact symplectic manifolds (cf.~\cite{Ki})
or isotropy actions of compact symmetric spaces (see \cite{Go}). 
A larger class consists of the so-called Cohen-Macaulay actions,
which are defined in Sect.~\ref{sec:general} below.
Their relevance in the context of equivariant topology was 
noticed by several authors already  in the  70s and early 80s: for example, M.~F.~Atiyah used the Cohen-Macaulay condition
to study  equivariant $K$-theory for torus actions in \cite{Atiyah} and his ideas 
were adapted to equivariant cohomology by G.~E.~Bredon in \cite{Bredon}; see also the paper \cite{Duflot} by J.~Duflot for actions of discrete groups. 
 More recently, Lie group actions whose equivariant cohomology satisfies the requirement  above have been considered in  \cite{FranzPuppe2003},
before being thoroughly
investigated in  \cite{Go-Ro} and \cite{Go-To}. 
 They are natural generalizations of equivariantly formal group actions to the more general situation when
all isotropy subgroups may have their ranks strictly smaller than the rank of the acting group.
Cohen-Macaulay actions are worth studying due to their
interesting features: for example, just like in the equivariantly formal case, 
a compact Lie group  action  and the induced action of a maximal torus satisfy simultaneously
this requirement; furthermore, Cohen-Macaulay actions are
characterized by the exactness of a certain Atiyah-Bredon long sequence;
for more details we refer to Sect.~\ref{cmsec} and the appendix below. 
Also, this condition  is satisfied in numerous concrete situations, for example 
when the cohomogeneity of the action is zero or one, as it was pointed out in \cite{Go-Ma}.

Cohomogeneity-one actions are special cases of hyperpolar actions. 
Recall  that an isometric action of a compact connected Lie group 
  on a Riemannian manifold is {\it polar} if there exists a submanifold which is
  intersected by each orbit of the action,  the  orbit being orthogonal to the submanifold
  at each intersection point. If the submanifold can be chosen to be flat relative to
  the induced Riemannian metric, we say that the action is {\it hyperpolar}.
  In this paper we will be interested in the case when  the manifold on which the group acts
  is a Riemannian symmetric space of compact type,
  that is, a quotient $G/H$, where $G$ is  a compact connected semisimple Lie group 
  and $H$ a closed subgroup such that $G^\sigma_0 \subset H \subset G^\sigma$, where $\sigma$ is 
  an involutive automorphism of $G$, $G^\sigma$ its fixed point set and $G^\sigma_0$ the identity component
  of the latter group. Polar actions on compact symmetric spaces have  been extensively investigated by many authors,
  see  for instance \cite{HPTT1}, \cite{HPTT2}, \cite{Ko1}, \cite{Ko2}, \cite{Ko3}, and \cite{Ko}.
  We will prove the following:
  
  \begin{theorem}\label{thm:realmain}
  Any hyperpolar action of a compact connected Lie group on a 
  symmetric space of compact type is Cohen-Macaulay.
  \end{theorem}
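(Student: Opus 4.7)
The strategy combines two standard reductions with a case analysis based on the classification of hyperpolar actions on irreducible compact symmetric spaces. First I would reduce to the action of a maximal torus $T \subseteq K$ on $M$, using the fact (recalled in Section~\ref{cmsec} and the appendix) that a compact connected Lie group action is Cohen-Macaulay if and only if the induced action of any maximal torus is. Next, using the Künneth formula together with the fact that a tensor product of Cohen-Macaulay modules over the corresponding tensor product of polynomial rings remains Cohen-Macaulay, I would further reduce to the case where $M$ is an \emph{irreducible} compact symmetric space.

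At this point I would invoke Kollross's classification: up to orbit equivalence, every hyperpolar action on an irreducible compact symmetric space either has cohomogeneity at most one or is a Hermann action --- the left-multiplication action of a symmetric subgroup $L \subseteq G$ on $G/H$ where $H$ is itself the fixed-point group of an involution on $G$. Cohomogeneity-one actions are Cohen-Macaulay by the main result of \cite{Go-Ma}, and since the Cohen-Macaulay property depends only on the orbit stratification by isotropy rank (through the Atiyah-Bredon characterization recalled in the appendix), it is invariant under orbit equivalence; this settles the first case and reduces the theorem to the Hermann action case.

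The Hermann case is the main obstacle. Here my plan is to exploit the explicit Lie-theoretic description: the two involutions $\sigma, \tau$ defining $L$ and $H$ give rise to a common Cartan-type subspace $\a \subseteq \g$ on which a generalized Weyl group acts by reflections determined by the $\sigma$-$\tau$ root system; the exponential image of $\a$ is a section for the action, and the isotropy subgroups along it are read off from the root data. Using this description I would verify Cohen-Macaulayness either by a direct computation of $H_T^*(G/H)$ via a Cartan-style model for equivariant cohomology of homogeneous spaces, or by checking exactness of the Atiyah-Bredon sequence for the isotropy-rank stratification. A secondary induction on the rank of the associated root system, with the cohomogeneity-one reduction as base case, seems the natural framework for carrying this out; the key technical input is a precise understanding of which isotropy groups arise along the section and how their ranks drop as one moves onto walls, which is exactly where the hyperpolarity assumption enters.
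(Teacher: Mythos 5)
Your overall skeleton (classification, invariance under orbit equivalence, separate treatment of the Hermann case) matches the paper's strategy, but two of your reduction steps have genuine gaps. First, the K\"unneth reduction to irreducible $M$ does not work: a hyperpolar action on a reducible symmetric space is in general \emph{not} a product of hyperpolar actions on the de Rham factors, because the acting group can sit diagonally across the factors. This is precisely why the paper needs Kollross's theorem on \emph{reducible} spaces \cite{Ko}: the decomposition into a product of transitive, cohomogeneity-one and Hermann actions holds only up to orbit equivalence, and the Hermann factors themselves live on non-simple groups (see the four types (i)--(iv) in Prop.~\ref{andreas}, where groups such as $L^{n-1}$ act across several copies of $L$). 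So the classification must be applied \emph{before} any product decomposition, not after a reduction to the irreducible case. Relatedly, your justification of orbit-equivalence invariance --- that the Cohen-Macaulay property ``depends only on the orbit stratification by isotropy rank'' via Atiyah--Bredon --- is too quick: the terms $H^*_G(M_{i-1},M_i)$ of that sequence are modules over $H^*(BG)$ and change with the acting group. The paper's proof (Lemma~\ref{prop:orbit}) instead compares the two actions through a common locally free subtorus $S$ and a homogeneity-rank argument showing $M_{b,K}=M_{b',K'}$; the statement is true, but it needs this argument, not just the appendix.

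Second, the Hermann case, which you correctly identify as the main obstacle, is left as a plan rather than a proof, and the plan does not contain the idea that makes the paper's argument work. The paper reduces (via Prop.~\ref{crit}, Prop.~\ref{redcon} and Prop.~\ref{carlson}) to the two-sided $H\times K$-action on a simply connected $G$; for $G$ simple and $G\neq\mathrm{Spin}(8)$ the key step is Lemma~\ref{torus}, which conjugates a maximal torus of $K$ into $H$ whenever $\mathrm{rank}\,K\le\mathrm{rank}\,H$, whereupon Cohen-Macaulayness follows from the equivariant formality of the isotropy action of $H$ on $G/H$ \cite{Go} together with Prop.~\ref{maxtor}. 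The case $G=\mathrm{Spin}(8)$ is handled separately because the relevant symmetric space is $S^7$, and the non-simple case is handled by Kollross's structure theorem plus Baird's result on twisted conjugation actions. Your proposed route through sections, generalized Weyl groups and an induction on the rank of the $\sigma$-$\tau$ root system would require a detailed control of the isotropy stratification that is nowhere carried out, and it is not clear it can be made to close; as written it is a research program rather than an argument.
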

  
  The proof relies essentially on the classification of the actions
  mentioned in the theorem, which was obtained  
  by A.~Kollross in \cite{Ko}. Before stating this result, 
  we need to describe an important class of examples  
  of hyperpolar actions on the symmetric space $G/H$ mentioned above.  
  Let $\tau$ be another involutive automorphism of $G$
  and consider its fixed point set, $G^\tau$, along with the identity component 
  $K:=G^\tau_0$.  The action of $K$ on
   $G/H$ by left translations turns out to be hyperpolar, see
   \cite[Ex.~3.1]{HPTT1}.
   It is known under the generic name of a  {\it Hermann action}, after 
   R.~Hermann, who first investigated this situation in \cite{Her}
    (note that originally, in \cite{Her}, the group $G^\tau$ was assumed
   to be connected).  According to Kollross' theorem \cite{Ko}, 
    any hyperpolar action is orbit equivalent
  to a direct product of actions of one of the following types:
  transitive, of cohomogeneity one, or Hermann (for the notion of orbit equivalence, see Sect.~\ref{oea}). 
  We already know that any action of one of the first two types
  is Cohen-Macaulay, see \cite[Cor.~1.2]{Go-Ma}. Thus, to prove Thm.~\ref{thm:realmain}, there are two
  steps to be performed: show first that any Hermann action is Cohen-Macaulay and second that the Cohen-Macaulay property is preserved 
  under 
  orbit equivalence. 
  The first goal is achieved by the following result:
  
  \begin{theorem} \label{main} If $G$, $K$, and $H$ are as above, then the (Hermann) action of $K$ on $G/H$ by left translations is Cohen-Macaulay.
   \end{theorem}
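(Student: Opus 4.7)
The plan is to verify the Cohen-Macaulay property for the Hermann action $K\curvearrowright G/H$ by reducing to a torus action and then applying an Atiyah--Bredon-type criterion, using the structural input coming from the pair of involutions $(\sigma,\tau)$.

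First, I would invoke the general reduction developed in \cite{Go-To}: a compact connected Lie group action is Cohen-Macaulay if and only if the induced action of a maximal torus is Cohen-Macaulay. So I replace $K$ by a maximal torus $T\subseteq K$ and aim to show that the $T$-action on $G/H$ is Cohen-Macaulay. Inside $\k=\mathrm{Lie}(K)$ I would choose the Cartan subalgebra underlying $T$ compatibly with the $\sigma$-decomposition $\g=\h\oplus\m$, so that the resulting torus respects the symmetric structure of $G/H$.

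Second, I would exploit the hyperpolar structure. The Hermann action admits a flat section $\Sigma = \exp(\a)\cdot eH$, where $\a$ is a maximal abelian subspace of $\m\cap\p$, with $\m=\g^{-\sigma}$ and $\p=\g^{-\tau}$. Both the $T$-fixed points and the $1$-dimensional $T$-orbits on $G/H$ can then be described in terms of the restricted root system associated to the pair $(\sigma,\tau)$ and the Weyl group of the action on $\Sigma$. With this combinatorial description in hand, I would apply the Atiyah--Bredon criterion (see Sect.~\ref{cmsec}): the $T$-action is Cohen-Macaulay if and only if the long sequence associated to the orbit-type filtration of $G/H$ under $T$ is exact.

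Third, to verify exactness, a natural strategy is to compare the $K$-action with the isotropy action $H\curvearrowright G/H$ (the Hermann action corresponding to $\tau=\sigma$), which is equivariantly formal by \cite{Go}, and therefore trivially Cohen-Macaulay. Using the compatibility between $\sigma$ and $\tau$---for instance, by choosing a maximal torus of $G$ that interacts well with both involutions, or by an induction on the rank $\dim\a$ of the section---one would transfer cohomological information from the equivariantly formal isotropy case to the Hermann action of $K$.

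The principal obstacle is that Hermann actions with $K\neq H$ are not, in general, equivariantly formal, so classical GKM-style tools do not apply directly. The key difficulty will therefore be a uniform verification of Atiyah--Bredon exactness at every step of the orbit filtration, without resorting to a case-by-case analysis based on Kollross' classification. A careful study of the isotropy structure along $\Sigma$, and of how the two involutions intertwine a maximal torus of $G$, appears to be the central technical step.
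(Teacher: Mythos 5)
Your outline correctly identifies the two reductions that the paper also uses as its starting point (passing from $K$ to a maximal torus $T\subseteq K$ via Prop.~\ref{maxtor}, and feeding in the equivariant formality of the isotropy action from \cite{Go}), but the central step of your plan is missing rather than merely difficult, and as stated the transfer argument cannot work in general. The paper's actual mechanism is not an Atiyah--Bredon exactness computation along the flat section: it is Lemma~\ref{torus}, which shows (for $G$ simple, simply connected, $G\neq{\rm Spin}(8)$, and ${\rm rank}\,K\le{\rm rank}\,H$) that a maximal torus of $K$ is \emph{conjugate into} $H$. Once $T$ sits inside $H$, the $T$-action on $G/H$ is a restriction of the equivariantly formal $H$-action and is therefore itself equivariantly formal, hence Cohen-Macaulay, and Prop.~\ref{maxtor}(b) finishes the simple case. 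That conjugacy statement is proved by a careful comparison of how $\sigma$ and $\tau$ act on a common maximal torus and Weyl chamber of $G$, reducing to Dynkin diagram automorphisms; this is precisely the ``careful study of how the two involutions intertwine a maximal torus'' that you flag as the central technical step but do not carry out. Moreover, when ${\rm rank}\,K>{\rm rank}\,H$ the torus $T$ has no fixed point on $G/H$ and the $T$-action is \emph{not} equivariantly formal, so no direct comparison with the isotropy action is possible; the paper circumvents this by the symmetry of Prop.~\ref{crit}, replacing the $K$-action on $G/H$ by the $H$-action on $G/K$. Your proposal does not account for this case.

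Two further points where the plan diverges from what is actually needed. First, your stated goal of avoiding any classification-based case analysis is not achieved in the paper either, and for a reason you would also run into: when $G$ is semisimple but not simple, the structure of Hermann actions on reducible symmetric spaces (Prop.~\ref{andreas}, due to Kollross \cite{Ko}) is used to split the action into four explicit types, one of which reduces to a twisted conjugation action $g\cdot x=gx\alpha(g)^{-1}$ handled by Baird's theorem that all its isotropy groups have equal rank---an ingredient entirely outside the flat-section/restricted-root picture. Second, the exceptional case $G={\rm Spin}(8)$ (where triality defeats the Dynkin-diagram argument of Lemma~\ref{torus}) requires a separate treatment via Lemma~\ref{odd} on rational cohomology spheres. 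In short, the proposal is a plausible research programme, but the argument that would make it a proof---either a verification of Atiyah--Bredon exactness from the restricted root data, or the conjugacy Lemma~\ref{torus} together with the rank-swapping trick---is absent.
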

  
  A big part of the paper is devoted to the proof of this theorem, see Sect.~\ref{semisimple}
 (also note that the result remains true even if $G$ is not necessarily semisimple, see Rem.~\ref{nonsemi}). 
 It is worth recalling at this point a related result, obtained by the first-named author of this paper  in \cite{Go},
 which says that the $K$-action on $G/K$ by left translations is equivariantly formal;
 recently, a conceptually different proof has been  obtained by the second-named author in \cite{No}. 
  Thm.~\ref{main} above is a generalization of this (taking into account Prop.~\ref{characef} below). Pairs $(G,K)$ with the property
  that the action of $K$ on $G/K$ is equivariantly formal 
   have also been investigated in \cite{Go-No}, \cite{Ca2}, \cite{Ca1}, \cite{Ca-Fo}, and again \cite{No}.

The second of the aforementioned goals is to show that the Cohen-Macaulay property is preserved
under orbit equivalence. This turns out to be true:
see Thm.~\ref{orbit}, which represents the main result of Sect.~\ref{oea}. 
The  proof of Thm.~\ref{thm:realmain} is also given in full detail  in that section.    

\noindent{\bf Acknowledgement.} We would like to thank the referee  for numerous valuable comments
on a previous version of this paper.
   
\section{General considerations}\label{sec:general}

\subsection{The action of $G$ on $M$}\label{cmsec}
Throughout this subsection $G$ will always be a  compact and connected Lie group
which acts smoothly on a closed manifold $M$,
although many of the results presented below hold in a  larger generality.  
To any such action   
one attaches the equivariant cohomology $H^*_G(M)$.
This can be
defined as the usual cohomology of the Borel construction $EG\times_G M$,
where $EG\to BG$ is the classifying principal bundle of $G$.
This construction originally belongs to A.~Borel \cite{Bo} and it became
gradually an important tool in the theory of transformation
groups by work of M.~F.~Atiyah and R.~Bott \cite{At-Bo}, D.~Quillen \cite{Quillen}, and many
others. The book \cite{AlldayPuppe} by C.~Allday and V.~Puppe is a useful reference for this topic. 
We confine ourselves to mentioning the basic fact  that $H^*_G(M)$ has a canonical structure of an 
$H^*(BG)$-algebra. 

In case $H^*_G(M)$  is free as an $H^*(BG)$-module, we say that the $G$-action is {\it equivariantly formal}.
Here is a well-known class of examples of such actions (cf.~\cite[Thm.~6.5.3]{GS}):

\begin{ex} \label{exodd}
{\rm If $H^{\rm odd}(M)=0$ then any $G$-action on $M$ is equivariantly formal.}
\end{ex}

The following criterion will be useful later, see \cite[p.~46]{Hs}.

\begin{proposition}\label{criteqf}
Let $T$ be  a torus acting on a closed manifold $M$.

(a) The total Betti number of
the fixed point set $M^T$ is at most equal to the total Betti number of $M$.

(b) The two numbers mentioned above are equal if and only if the $T$-action is equivariantly formal.
\end{proposition}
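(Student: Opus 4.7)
My plan is to combine the Serre spectral sequence of the Borel fibration $M \hookrightarrow ET \times_T M \to BT$ with the Borel-Hsiang-Quillen localization theorem. Set $S := H^*(BT)$, which is a polynomial ring over $\bQ$ and in particular an integral domain. The $E_2$-page of the spectral sequence is the free $S$-module $S \otimes_\bQ H^*(M)$, converging to $H^*_T(M)$; all differentials $d_r$ are $S$-linear, since $S$ acts via pull-back from the base.

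For (a) I would compare $S$-ranks. Since each $E_{r+1}$ is an $S$-linear subquotient of $E_r$, one has $\mathrm{rank}_S E_{r+1} \leq \mathrm{rank}_S E_r$, whence
\[
\mathrm{rank}_S H^*_T(M) = \mathrm{rank}_S E_\infty \leq \mathrm{rank}_S E_2 = \dim_\bQ H^*(M).
\]
On the other hand, since $T$ acts trivially on $M^T$, its Borel spectral sequence degenerates, giving $H^*_T(M^T) \iso S \otimes_\bQ H^*(M^T)$ of $S$-rank $\dim_\bQ H^*(M^T)$. The localization theorem (cf.~\cite{Hs}) asserts that the restriction $H^*_T(M) \to H^*_T(M^T)$ becomes an isomorphism after inverting the nonzero homogeneous elements of $S$, so the two $S$-ranks coincide. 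Combining yields $\dim_\bQ H^*(M^T) \leq \dim_\bQ H^*(M)$.

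For (b), equality in (a) amounts to $\mathrm{rank}_S E_\infty = \mathrm{rank}_S E_2$. I would argue inductively that this forces $E_2$-degeneration: because $E_2$ is free (hence torsion-free), a nonzero $d_2$ would have torsion-free image of positive rank, strictly reducing the rank of $E_3$; thus $d_2 = 0$ and $E_3 = E_2$ is again torsion-free, and iterating gives $d_r = 0$ for all $r$. Degeneration yields a bounded filtration of $H^*_T(M)$ whose associated graded is the free $S$-module $S \otimes_\bQ H^*(M)$; over the polynomial ring $S$ such a filtration splits, so $H^*_T(M)$ is free, i.e., the action is equivariantly formal. Conversely, equivariant formality implies $H^*_T(M) \otimes_S \bQ \iso H^*(M)$ (a standard consequence of freeness together with the convergence of the spectral sequence), whence $\mathrm{rank}_S H^*_T(M) = \dim_\bQ H^*(M)$, which by localization equals $\dim_\bQ H^*(M^T)$.

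The main subtlety is the equivalence between equivariant formality and $E_2$-degeneration of the Borel spectral sequence; the rest of the proof is rank bookkeeping built on the localization theorem, which I would quote directly from \cite{Hs}.
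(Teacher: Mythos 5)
The paper does not actually prove Proposition~\ref{criteqf}: it is quoted from the literature with the citation \cite[p.~46]{Hs}, so there is no in-text argument to compare against. Your proof is, in substance, the standard argument from that very source (Borel localization plus a rank count in the Serre spectral sequence of the Borel fibration), and it is correct in outline: the identification $\dim_\bQ H^*(M^T)=\mathrm{rank}_S H^*_T(M)$ via localization, the inequality $\mathrm{rank}_S E_\infty\le\mathrm{rank}_S E_2$ with the rank dropping by $2\,\mathrm{rank}_S(\mathrm{im}\,d_r)$ at each page, and the torsion-freeness argument showing that equality of ranks forces $E_2$-degeneration are all sound (note that the sequence of pages stabilizes because $E_2$ has only finitely many nonzero rows). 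Two steps are stated more loosely than they deserve. First, ``over the polynomial ring $S$ such a filtration splits'' is not quite right as written: the individual graded pieces $F^p/F^{p+1}\cong E_\infty^{p,*}$ are finite-dimensional $\bQ$-vector spaces, not free $S$-modules, so one cannot split the filtration piece by piece; the correct route is Leray--Hirsch, i.e.\ lift a homogeneous basis of $E_\infty^{0,*}=H^*(M)$ to classes in $H^*_T(M)$ and check, by induction on the filtration, that they generate $H^*_T(M)$ freely over $S$. Second, in the converse direction the assertion that freeness gives $H^*_T(M)\otimes_S\bQ\cong H^*(M)$ is exactly the nontrivial implication (freeness $\Rightarrow$ surjectivity of the restriction to the fiber); it is standard, but it requires either the Eilenberg--Moore spectral sequence $\mathrm{Tor}_S(H^*_T(M),\bQ)\Rightarrow H^*(M)$ or an equivalent argument, and should be justified rather than waved at as ``convergence of the spectral sequence.'' With those two points tightened, the proof is complete and matches the classical one.
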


In what follows we will be looking in more detail at the algebraic 
structure of $H^*_G(M)$. Relative to its structure of an $H^*(BG)$-module,
it is a positively graded module over a positively graded ring. The latter, $H^*(BG)$, 
is a polynomial algebra on generators of even degrees,
 and thus a commutative, graded, Noetherian ring. 
It is also a *local ring, in the sense that it has a unique graded ideal
which is maximal among all graded ideals (namely $H^{>0}(BG)$).
 The theory of graded modules over *local Noetherian rings  is nicely treated in  \cite[Sect.~1.5]{BrunsHerzog}. 
 Let us just  briefly recall that to each such module  one can associate 
its depth, that is, the maximal length of a regular sequence of elements in the maximal graded ideal. One can also associate its Krull dimension, which is the Krull dimension of the ring modulo the annihilator of the module. In general, the latter number is at least equal to the depth. 
We say that the module is Cohen-Macaulay if it is equal to 0 or its Krull dimension is equal to its depth. For a more detailed account of this topic, 
 suitable for applications to equivariant cohomology, we  refer to \cite[Sect.~5]{Go-To}.

 \begin{definition}\label{def:cm} A smooth action $G\times M \to M$ is called {\rm Cohen-Macaulay}
 if $H^*_G(M)$, considered as a module over $H^*(BG)$, is Cohen-Macaulay. 
 \end{definition} 
 
A systematic study of this type of actions was undertaken in \cite{Go-Ro} and \cite{Go-To}
and we will often rely on results obtained there. 
However, some caution is necessary, since in the two references above 
the coefficient field for cohomology is $\bR$, whereas in this paper it is $\bQ$.
We will next show that the two seemingly different Cohen-Macaulayness conditions
are in fact equivalent. This will be done by using the following general result:

\begin{proposition}\label{commalg}
Let $R$ and $S$ be two Noetherian *local rings  and $\varphi : R \to S$ a  ring homomorphism 
which is homogeneous of degree 0 and maps the maximal graded ideal ${\mathfrak m}$ of $R$ into the maximal graded ideal
${\mathfrak n}$ of $S$.  Assume that both $R/{\mathfrak m}$ and $S/{\mathfrak n}$ are fields.
 Let $A$  be a finitely generated non-zero $R$-module and
$B$ a finitely generated non-zero $S$-module, which is flat over $R$. Then:

(a) ${\rm depth}_S \ \! A\otimes_R B = {\rm depth}_R \ \! A + {\rm depth}_S \ \! B/{\mathfrak m}B$;

(b) $\dim_S A\otimes_R B = \dim_R  A + \dim_S  B/{\mathfrak m}B$;

(c) The module $A\otimes_R B$ is Cohen-Macaulay over $S$ if and only if $A$ and $B/{\mathfrak m}B$
are Cohen-Macaulay over $R$ and $S$, respectively.
\end{proposition}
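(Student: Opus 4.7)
\emph{Strategy.} Proposition~\ref{commalg} is the graded (*local) analogue of the classical flat base change formulas for depth, dimension, and Cohen-Macaulayness over Noetherian local rings; the ungraded statement is \cite[Thm.~A.11]{BrunsHerzog}. My plan is to rerun the ungraded proofs in the graded setting, replacing ordinary Nakayama by its graded form, insisting that regular sequences consist of homogeneous elements in the maximal graded ideal, and interpreting $\dim$ as the Krull dimension of the quotient by the annihilator, computed via graded primes. The hypothesis that $R/\mathfrak{m}$ and $S/\mathfrak{n}$ are genuine fields (not merely graded fields, which a priori could be Laurent polynomial rings in one variable) ensures that all numerical invariants behave exactly as in the ungraded setting. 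Note also that $B/\mathfrak{m}B\neq 0$: if $\mathfrak{m}B=B$ then $\mathfrak{n}B=B$, forcing $B=0$ by graded Nakayama.

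\emph{Sketch of (a) and (b).} Formula (a) is proved by a double induction on ${\rm depth}_R A$ and ${\rm depth}_S(B/\mathfrak{m}B)$. A homogeneous $x\in\mathfrak{m}$ that is $A$-regular becomes $(A\otimes_R B)$-regular by flatness of $B$ over $R$, and, in the depth-zero case on the $A$-side, a homogeneous $y\in\mathfrak{n}$ that is $(B/\mathfrak{m}B)$-regular can be shown to be $(A\otimes_R B)$-regular using a filtration of $A$ by shifts of $R/\mathfrak{m}$ together with a long-exact-sequence argument. In each such step both sides of the claimed identity decrease by one, reducing to the base case ${\rm depth}_R A={\rm depth}_S(B/\mathfrak{m}B)=0$: a nonzero element of $A$ killed by $\mathfrak{m}$ produces, after tensoring with the flat $B$, an embedding $B/\mathfrak{m}B\hookrightarrow A\otimes_R B$, and combining with a socle element of $B/\mathfrak{m}B$ yields a socle element of $A\otimes_R B$, so ${\rm depth}_S(A\otimes_R B)=0$. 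For (b), the graded support of $A\otimes_R B$ inside $S$ equals ${\rm Supp}_S(B)$ intersected with the preimage under $\varphi$ of ${\rm Supp}_R(A)$; the going-down property of the flat homomorphism $R\to S$ then splits a saturated chain of graded primes in this support into a chain pulled back from ${\rm Supp}_R A$ and a chain lying inside ${\rm Supp}_S(B/\mathfrak{m}B)$, yielding the additive dimension formula.

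\emph{Part (c) and main obstacle.} Combining (a) and (b),
$$\dim_S(A\otimes_R B)-{\rm depth}_S(A\otimes_R B)=\bigl(\dim_R A-{\rm depth}_R A\bigr)+\bigl(\dim_S(B/\mathfrak{m}B)-{\rm depth}_S(B/\mathfrak{m}B)\bigr),$$
and since both bracketed differences are non-negative (depth is at most Krull dimension), the left side vanishes iff both do, proving (c). The main obstacle is the delicate inductive step in (a) where a $(B/\mathfrak{m}B)$-regular element is promoted to an $(A\otimes_R B)$-regular element: this is the single place where flatness of $B$ over $R$ plays a nontrivial role beyond the automatic preservation of regularity of elements coming from $\mathfrak{m}$. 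The remainder is routine translation from the ungraded proofs of \cite[Thm.~A.11]{BrunsHerzog} to the *local framework described in \cite[Sect.~1.5]{BrunsHerzog}.
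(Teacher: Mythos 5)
Your argument is correct in outline but takes a genuinely different route from the paper. The paper does not redo the local proofs in the graded category: it reduces the *local statement to the classical local one (Prop.~1.2.16 and Thm.~A.11 of \cite{BrunsHerzog}) by localizing $R$ and $S$ at their maximal graded ideals, quoting ${\rm grade}(I,A)={\rm grade}(I_{\mathfrak m},A_{\mathfrak m})$ for graded ideals $I$ (\cite[Prop.~1.5.15 (e)]{BrunsHerzog}) to handle the three depths in (a), and $\dim A=\dim A_{\mathfrak m}$ (which uses that $R/\mathfrak m$ is a field) to handle the three dimensions in (b); part (c) then follows exactly as in your final display. That route is shorter and outsources every delicate point to the literature. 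Your approach --- rerunning the depth induction with homogeneous regular sequences and the going-down argument with graded primes --- is more self-contained but obliges you to re-verify the graded analogues of several lemmas (graded prime avoidance to produce a homogeneous $A$-regular element when ${\rm depth}_R A>0$, the fact that $\dim$ is computed by chains of graded primes, and so on). One step of your sketch should be stated more carefully: a finitely generated $A$ with ${\rm depth}_R A=0$ need not admit a finite filtration whose quotients are shifts of $R/\mathfrak m$ (that would force $A$ to have finite length). The correct d\'evissage for promoting a $(B/\mathfrak m B)$-regular $y\in\mathfrak n$ to an $(A\otimes_R B)$-regular element is the $\mathfrak m$-adic filtration of $A\otimes_R B$: its graded pieces $(\mathfrak m^jA/\mathfrak m^{j+1}A)\otimes_R B$ are finite direct sums of shifts of $B/\mathfrak m B$, on which $y$ acts injectively, and one concludes with the graded Krull intersection theorem $\bigcap_j\mathfrak m^j(A\otimes_R B)=0$. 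With that adjustment your proof goes through and gives the same statement; what the paper's localization trick buys is precisely not having to check any of this.
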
 

\begin{proof} In the special case when $R$ and $S$ are just local rings, both (a) and (b) are standard facts in commutative algebra,
cf.~e.g.~Prop.~1.2.16 and Thm.~A.11 in \cite{BrunsHerzog}. 
The general case can be reduced to this special situation by using localization at 
the maximal graded ideals. One takes into account that if $I$ is a graded ideal of $R$ then
$ {\rm grade} \ \! (I, A)={\rm grade} \ \! (I_{\mathfrak m}, A_{\mathfrak m})$,
see \cite[Prop.~1.5.15 (e)]{BrunsHerzog}: this takes care of the three terms involved
in equation (a). For (b), one uses three times the general formula $\dim A=\dim A_{\mathfrak m}$,
for which we refer to the proof of Prop.~5.1 in \cite{Go-To} (note that the argument found there uses
 the assumption that $R/{\mathfrak m}$ is a field).   
\end{proof}

\begin{corollary}
A group action $G\times M \to M$ is Cohen-Macaulay in the sense of Def.~\ref{def:cm}
if and only if $H^*_G(M; \bR)$ is Cohen-Macaulay as a module over $H^*(BG; \bR)$.
\end{corollary}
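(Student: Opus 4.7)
The plan is to apply Proposition~\ref{commalg} directly with the data
\[
R = H^*(BG; \bQ), \quad S = H^*(BG; \bR), \quad A = H^*_G(M; \bQ), \quad B = H^*(BG; \bR),
\]
and with $\varphi : R \to S$ the ring homomorphism induced by the inclusion $\bQ \hookrightarrow \bR$. The key identification to be made is $A \otimes_R B \cong H^*_G(M; \bR)$ as graded $S$-modules, which follows from the universal coefficient theorem: since $\bR$ is flat over $\bQ$ (the latter being a field), one has $H^*_G(M; \bR) \cong H^*_G(M; \bQ) \otimes_\bQ \bR$, and then the associativity of tensor product gives $H^*_G(M;\bQ) \otimes_\bQ \bR \cong H^*_G(M;\bQ) \otimes_R (R \otimes_\bQ \bR) = A \otimes_R B$.

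Next I would verify the hypotheses of Proposition~\ref{commalg}. Both $R$ and $S$ are polynomial algebras on generators of even positive degree, hence Noetherian and $*$local with $R/\mathfrak{m} = \bQ$ and $S/\mathfrak{n} = \bR$ fields. The map $\varphi$ is homogeneous of degree $0$ and takes $\mathfrak{m}$ into $\mathfrak{n}$. For flatness of $B$ over $R$, note that $B = R \otimes_\bQ \bR$, and $\bR$ is flat over $\bQ$; flatness is preserved under base change, so $B$ is flat over $R$. Finite generation of $A$ over $R$ follows from compactness of $M$ (a standard fact for Borel equivariant cohomology of compact $G$-manifolds), and neither $A$ nor $B$ is zero since both contain the degree $0$ part.

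Finally, I compute $B/\mathfrak{m}B$. Since $\mathfrak{m} \subset \mathfrak{n}$ and in fact $\mathfrak{n} = \mathfrak{m} \otimes_\bQ \bR$, one has $B/\mathfrak{m}B = S/\mathfrak{n} = \bR$, concentrated in degree $0$. As a module over $S$, the residue field $\bR$ has Krull dimension $0$ and depth $0$, hence is trivially Cohen-Macaulay. Applying Proposition~\ref{commalg}(c), we conclude that $A \otimes_R B = H^*_G(M; \bR)$ is Cohen-Macaulay over $S = H^*(BG;\bR)$ if and only if $A = H^*_G(M; \bQ)$ is Cohen-Macaulay over $R = H^*(BG; \bQ)$, which is exactly the desired equivalence.

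There is no real obstacle here; the argument is essentially a bookkeeping application of the commutative-algebra proposition just proved, and the only point that requires mild attention is the naturality of the universal coefficient isomorphism with respect to the $H^*(BG)$-module structure, ensuring that the identification $H^*_G(M;\bR) \cong A \otimes_R B$ respects the module structures used in Proposition~\ref{commalg}.
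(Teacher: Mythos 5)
Your proof is correct and follows exactly the route of the paper: the paper's own argument is a one-line application of Proposition~\ref{commalg} with the same choices $R=H^*(BG)$, $S=B=H^*(BG;\bR)$, $A=H^*_G(M)$, and $\varphi$ the inclusion. You have merely supplied the routine verifications (the identification $A\otimes_R B\cong H^*_G(M;\bR)$, flatness of $B$ over $R$, and the computation $B/\mathfrak{m}B=\bR$) that the paper leaves implicit, and these are all correct.
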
  

\begin{proof} One  applies Proposition \ref{commalg} by taking $R=H^*(BG)$, $S=H^*(BG; \bR)$, $A=H^*_G(M)$, and $B=H^*(BG;\bR)$,
 in each case with
the obvious module structure; also, take $\varphi :  H^*(BG)\to H^*(BG; \bR)$
the  inclusion map.
\end{proof}

\begin{remark} \label{pair} {\rm 
The same argument shows that if $N$ is a $G$-invariant subspace of $M$, then
the $G$-equivariant cohomologies $H^*_G(M, N)$ and $H^*_G(M, N; \bR)$ as modules
over $H^*(BG)$, respectively $H^*(BG; \bR)$, have the same depth, the same Krull dimension,
and are thus simultaneously
Cohen-Macaulay.}  
\end{remark}
 
 An immediate observation is that any equivariantly formal action is Cohen-Macaulay.
 The converse implication is in general not true, as one can easily see
 in concrete situations (for example, by \cite[Prop.~2.6]{Go-Ma}, any transitive action is Cohen-Macaulay,
 without being in general equivariantly formal). However, the following result is helpful in this context,
 see \cite[Prop.~2.5]{Go-Ro}:
 
 \begin{proposition}\label{characef}
 A smooth action $G\times M \to M$ is equivariantly formal if and only if it is Cohen-Macaulay
 and there exists at least one point in $M$ whose isotropy subgroup has rank equal to
 ${\rm rank } \ \! G$.
\end{proposition}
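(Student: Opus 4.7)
The strategy is first to reduce to the case of a torus action and then to invoke the Auslander--Buchsbaum formula. Let $T\subset G$ be a maximal torus with Weyl group $W$. Over $\bQ$ one has $H^*_G(M)=H^*_T(M)^W$ and $H^*(BG)=H^*(BT)^W$, and since $W$ acts on the polynomial ring $H^*(BT)$ as a reflection group, $H^*(BT)$ is finitely generated and free over $H^*(BG)$. Combining these facts with Proposition~\ref{commalg} shows that the $G$-action is Cohen-Macaulay, respectively equivariantly formal, if and only if the induced $T$-action is. Moreover, a point $x\in M$ satisfies $\mathrm{rank}\,G_x=\mathrm{rank}\,G$ precisely when $G_x$ contains some maximal torus $gTg^{-1}$ of $G$, equivalently when $g^{-1}x\in M^T$; so the rank condition is equivalent to $M^T\neq\emptyset$. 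We may therefore assume $G=T$ throughout the remainder of the argument.

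The forward direction is now immediate: if $H^*_T(M)$ is free over $H^*(BT)$ it is automatically Cohen-Macaulay, and since $M$ is non-empty its total Betti number is positive, so Proposition~\ref{criteqf}(b) gives $M^T\neq\emptyset$.

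For the converse, assume the $T$-action is Cohen-Macaulay and $M^T\neq\emptyset$, and write $r=\mathrm{rank}\,T$. Since $H^*(BT)$ is a graded polynomial ring in $r$ variables (of degree $2$), the Auslander--Buchsbaum formula implies that a finitely generated graded $H^*(BT)$-module is free precisely when its depth equals $r$. By the Cohen-Macaulay hypothesis $\mathrm{depth}\,H^*_T(M)=\dim H^*_T(M)$, and $\dim H^*_T(M)\le r$ holds automatically. For the opposite inequality, restrict to $M^T$ to obtain $\rho:H^*_T(M)\to H^*_T(M^T)\cong H^*(M^T)\otimes H^*(BT)$; as $M^T$ is non-empty, $\rho(1)\neq 0$, so $\rho(H^*_T(M))$ contains $H^*(BT)\cdot 1$, hence has vanishing annihilator and Krull dimension $r$. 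Since $\rho(H^*_T(M))$ is a quotient of $H^*_T(M)$ this forces $\dim H^*_T(M)\ge r$, and freeness follows.

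The main obstacle is the commutative-algebra step in the converse, where Auslander--Buchsbaum is used to upgrade Cohen-Macaulayness to freeness; the torus reduction and the forward direction are essentially bookkeeping, though one must take some care in verifying that Cohen-Macaulayness transfers between the $G$- and $T$-actions via the finite free extension $H^*(BG)\subset H^*(BT)$.
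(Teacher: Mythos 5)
Your proof is correct, and since the paper does not prove this proposition but simply cites \cite[Prop.~2.5]{Go-Ro}, your argument in effect reconstructs the standard proof from that reference: reduce to a maximal torus, translate the rank condition into $M^T\neq\emptyset$, use the graded Auslander--Buchsbaum formula to turn freeness over $H^*(BT)$ into the condition $\mathrm{depth}=\mathrm{rank}\,T$, and pin down the Krull dimension of $H^*_T(M)$ from below via the nonvanishing restriction to $M^T$ (a self-contained substitute for Quillen's dimension formula in the only case needed). One cosmetic remark: the torus reduction is exactly Prop.~\ref{maxtor} of the paper and could be cited directly; deriving it from Prop.~\ref{commalg} as you do requires the base-change isomorphism $H^*_T(M)\cong H^*_G(M)\otimes_{H^*(BG)}H^*(BT)$, which is a (standard, Leray--Hirsch) strengthening of the identity $H^*_G(M)=H^*_T(M)^W$ that you actually state.
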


For later use, we also mention:

\begin{proposition}\label{maxtor} Let $G\times M \to M$ be a smooth action and $T\subset G$  an arbitrary maximal torus. 

(a) The $G$-action on $M$ is equivariantly formal if and only if so is the induced $T$-action.  

(b) The $G$-action on $M$ is Cohen-Macaulay if and only if so is the induced $T$-action. \end{proposition}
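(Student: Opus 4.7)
The plan is to deduce both parts from the tensor product isomorphism
\[
H^*_T(M) \cong H^*_G(M) \otimes_{H^*(BG)} H^*(BT),
\]
interpreted as an identification of $H^*(BT)$-modules. The fibration $G/T \hookrightarrow EG\times_T M \to EG\times_G M$ has fiber cohomology $H^*(G/T)$ concentrated in even degrees, so its Serre spectral sequence collapses at $E_2$; combining this collapse with the Chevalley--Shephard--Todd theorem that $H^*(BT)$ is free of rank $|W|$ over $H^*(BG) = H^*(BT)^W$ (equivalently, by applying the Eilenberg--Moore spectral sequence to the pullback of $BT\to BG$ along $EG\times_G M \to BG$), one obtains the displayed isomorphism.

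For (b), I would apply Proposition \ref{commalg} with $R=H^*(BG)$, $S=H^*(BT)$, $A=H^*_G(M)$, $B=H^*(BT)$, and $\varphi$ the restriction map. Freeness of $B$ over $R$ supplies the flatness hypothesis, and $B/{\mathfrak m}B \cong H^*(G/T)$ is a finite-dimensional $\bQ$-vector space, hence Cohen-Macaulay over $S$ with depth and Krull dimension both zero. Parts (a) and (b) of Proposition \ref{commalg} then equate the depth and Krull dimension of $H^*_T(M)$ over $H^*(BT)$ with those of $H^*_G(M)$ over $H^*(BG)$, and part (c) delivers the statement.

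For (a), I would invoke Proposition \ref{characef}: combined with (b), the task reduces to showing that $M$ contains a point of $G$-isotropy of full rank if and only if it contains a point of $T$-isotropy of full rank. One direction is clear since $\mathrm{rank}(T \cap G_y)=\mathrm{rank}\,T$ forces $T\subset G_y$, so $\mathrm{rank}\,G_y\geq \mathrm{rank}\,T=\mathrm{rank}\,G$; conversely, if $\mathrm{rank}\,G_x=\mathrm{rank}\,G$, then a maximal torus of $G_x$ is $G$-conjugate to $T$, and a suitable translate of $x$ has $T$ inside its stabilizer. Alternatively, (a) admits a direct algebraic derivation from the tensor product formula, using faithful flatness of $H^*(BT)$ over $H^*(BG)$ together with the fact that finitely generated flat modules over the graded polynomial ring $H^*(BG)$ are free.

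The main technical obstacle is securing the tensor product identification as an isomorphism of $H^*(BT)$-modules rather than merely of $H^*_G(M)$-modules or of underlying graded vector spaces; once this is granted, the argument reduces to a direct appeal to Proposition \ref{commalg} and elementary facts about graded *local rings.
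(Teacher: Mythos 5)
Your proposal is correct in substance, but note that the paper offers no proof of this proposition at all: it cites \cite{GGK} (Prop.~C.26) for part (a) and \cite{Go-Ro} (Prop.~2.9) for part (b). What you have written is essentially a reconstruction of the argument the paper outsources to those references; in particular, the proof of \cite{Go-Ro}, Prop.~2.9, runs exactly through the base-change isomorphism $H^*_T(M)\cong H^*_G(M)\otimes_{H^*(BG)}H^*(BT)$ and the behaviour of depth and Krull dimension under flat base change, which is precisely what Proposition~\ref{commalg} packages (with $B/\mathfrak{m}B\cong H^*(G/T)$ of depth and dimension zero, hence automatically Cohen-Macaulay). Two caveats. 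First, the Serre spectral sequence of $G/T\hookrightarrow EG\times_TM\to EG\times_GM$ does not collapse merely because $H^*(G/T)$ is concentrated in even degrees: the base $EG\times_GM$ may well have odd-degree cohomology, so $d_3$ is not excluded for degree reasons. The collapse follows from Leray--Hirsch --- a linear basis of $H^*(G/T)$ lifts to classes in $H^*(BT)$, hence to global classes on $EG\times_TM$ --- or, as you note, from the Eilenberg--Moore spectral sequence together with freeness of $H^*(BT)$ over $H^*(BG)$; either repair is routine, and the second also yields the identification as $H^*(BT)$-modules that you rightly single out as the crux. Second, deducing (a) from Proposition~\ref{characef} risks circularity, since the proof of that criterion in \cite{Go-Ro} itself passes through the reduction to a maximal torus; your alternative derivation --- descent of flatness along the faithfully flat map $H^*(BG)\to H^*(BT)$ combined with the fact that a finitely generated flat graded module over the *local Noetherian ring $H^*(BG)$ is free --- avoids this and is the safer route. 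Your equivalence between the existence of a point of full $G$-isotropy rank and $M^T\neq\emptyset$ is correct and worth retaining in either version.
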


Item (a) is a standard result, see for instance \cite[Prop.~C.26]{GGK}.  Item (b) is the content of \cite[Prop.~2.9]{Go-Ro}.  It is worth pointing out in this context that if $H\subset G$ is an arbitrary closed subgroup, then the property of being equivariantly formal is preserved when passing from 
$G$ to $H$: indeed, on the one hand, the $G$-action is equivariantly formal if and only if the canonical map
$H^*_G(M)=H^*(E\times_G M) \to H^*(M)$ is surjective (cf., e.g., \cite[Prop.~2.1]{Go-Ma}), and on the other hand the previous map factors as
$H^*_G(M) \to H^*_H(M) \to H^*(M)$.   However,  it is possible for the $G$-action to be Cohen-Macaualy and the $H$-action not to be like that, see \cite[Ex.~2.13]{Go-Ro}.

From the previous proposition one can deduce:

\begin{corollary} \label{dirprod} If  two smooth actions on closed manifolds  are Cohen-Macaulay then their direct product is Cohen-Macaulay as well.
\end{corollary}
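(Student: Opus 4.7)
The plan is to reduce the statement to torus actions via Proposition \ref{maxtor}(b), use a Künneth decomposition for the equivariant cohomology of a product, and then invoke Proposition \ref{commalg}(c). Suppose the actions $G_i \times M_i \to M_i$ are Cohen-Macaulay for $i = 1, 2$, and fix maximal tori $T_i \subset G_i$. By Proposition \ref{maxtor}(b), each $T_i$-action on $M_i$ is Cohen-Macaulay. Since $T_1 \times T_2$ is a maximal torus of $G_1 \times G_2$, a second application of that proposition in the reverse direction reduces the problem to showing that the product action of $T_1 \times T_2$ on $M_1 \times M_2$ is Cohen-Macaulay.

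Next, the Borel construction factors as $E(T_1 \times T_2) \times_{T_1 \times T_2} (M_1 \times M_2) \cong (ET_1 \times_{T_1} M_1) \times (ET_2 \times_{T_2} M_2)$. Both factors have cohomology of finite type in each degree (each is a fibre bundle over $BT_i$ with compact fibre $M_i$), so the Künneth formula over $\bQ$ yields
$$H^*_{T_1 \times T_2}(M_1 \times M_2) \;\cong\; H^*_{T_1}(M_1) \otimes_{\bQ} H^*_{T_2}(M_2)$$
as modules over $H^*(B(T_1 \times T_2)) = H^*(BT_1) \otimes_{\bQ} H^*(BT_2)$.

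I would then apply Proposition \ref{commalg}(c) with $R = H^*(BT_1)$, $S = H^*(BT_1) \otimes H^*(BT_2)$, with $\varphi : R \to S$ the inclusion as the first tensor factor, and with $A = H^*_{T_1}(M_1)$, $B = H^*(BT_1) \otimes H^*_{T_2}(M_2)$. Then $B$ is free over $R$ (any $\bQ$-basis of $H^*_{T_2}(M_2)$ gives an $R$-basis), and the Künneth isomorphism identifies $A \otimes_R B$ with $H^*_{T_1 \times T_2}(M_1 \times M_2)$; moreover $B/\m B \cong H^*_{T_2}(M_2)$, where $\m$ is the maximal graded ideal of $R$. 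Since $B/\m B$ is annihilated by $\m S$, its depth and Krull dimension as an $S$-module coincide with its depth and Krull dimension as a module over $S/\m S = H^*(BT_2)$; hence Cohen-Macaulayness over $S$ is equivalent to Cohen-Macaulayness over $H^*(BT_2)$, which holds by hypothesis. Combined with the Cohen-Macaulayness of $A$ over $R$, Proposition \ref{commalg}(c) yields the claim.

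The only step requiring real care is the last one: verifying, either by direct regular-sequence bookkeeping or by quoting a standard result from \cite{BrunsHerzog}, that a finitely generated module annihilated by a graded ideal $I \subset S$ is Cohen-Macaulay over $S$ precisely when it is Cohen-Macaulay over $S/I$. Apart from this small commutative-algebra observation, the argument is a mechanical assembly of Propositions \ref{maxtor} and \ref{commalg} with the Künneth formula.
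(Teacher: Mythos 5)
Your proof is correct, but it takes a genuinely different route from the paper's. Both arguments begin by passing to maximal tori via Prop.~\ref{maxtor}(b), but from there the paper proceeds geometrically: it quotes the characterization of Cohen-Macaulay torus actions from \cite[Rem.~2]{Go-To} --- existence of a locally freely acting subtorus $S_i\subset T_i$ of rank equal to the minimal orbit dimension such that the induced $T_i/S_i$-action on $M_i/S_i$ is equivariantly formal --- and observes that $S_1\times S_2$ has the analogous properties for the product action because equivariant formality is preserved under direct products. You instead combine the K\"unneth isomorphism for the Borel construction with the flat base-change statement of Prop.~\ref{commalg}(c), and all of its hypotheses are satisfied as you describe: $B=H^*(BT_1)\otimes_{\bQ}H^*_{T_2}(M_2)$ is free over $R$ and finitely generated over $S$, and $A\otimes_R B$ is exactly the K\"unneth module with its natural $S$-structure. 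The one step you flag as needing care --- that a finitely generated $S$-module annihilated by a graded ideal $I$ is Cohen-Macaulay over $S$ precisely when it is over $S/I$ --- is standard (regular sequences and annihilators only see $S/I$), and is in fact already available inside the paper: apply Lemma~\ref{serre} to the quotient map $S\to S/{\mathfrak m}S$, noting that $S/{\mathfrak m}S$ is generated by $1$ as an $S$-module. A further simplification available to you is that the initial reduction to tori is unnecessary: the K\"unneth decomposition and Prop.~\ref{commalg} apply verbatim with $G_i$ in place of $T_i$, since $H^*(BG_i)$ is still a polynomial ring on even-degree generators and $H^*_{G_i}(M_i)$ is finitely generated over it. In short, your route is more algebraic and more self-contained (it reuses the paper's own Prop.~\ref{commalg} and Lemma~\ref{serre} rather than citing the subtorus criterion from \cite{Go-To}), while the paper's route is shorter at the price of that external citation.
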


\begin{proof} Let the two group actions be 
$G_i \times  M_i \to M_i$ and let $T_i \subset G_i$ be maximal tori, 
where $i=1,2$. By Prop.~\ref{maxtor} above, the restricted $T_i$-action on $M_i$
is Cohen-Macaulay. 
By \cite[Rem.~2]{Go-To}, the restricted $T_i$-action on $M_i$
 is Cohen-Macaulay if and only if there exists a subtorus $S_i \subset T_i$ of rank equal to the minimal dimension
 of a $T_i$-orbit on $M_i$, which acts locally freely on $M_i$
  such that the induced action of $T/S_i$ on the orbit space $M/S_i$ is equivariantly formal. 
  But then $S_1 \times S_2$ acts locally freely on $M_1 \times M_2$, has rank equal to the minimal dimension
 of a $T_1\times T_2$-orbit on $M_1\times M_2$ and the induced action of 
  $T_1/S_1 \times T_2/S_2$ on $M_1/S_1 \times M_2/S_2$ is equivariantly formal (note that equivariant formality is preserved under taking the direct product). 
  Thus the $T_1\times T_2$-action on $M_1 \times M_2$ is Cohen-Macaulay, and so is the $G_1\times G_2$-action.  
 \end{proof}
 
 We will need an equivalent characterization of the Cohen-Macaulay condition above, this time
  exclusively in terms of the ring structure of $H^*_G(M)$. 
The latter is in general not a commutative ring, but it is nevertheless  graded commutative,
in the sense that $x\cdot y =(-1)^{({\rm deg} x)({\rm deg} y)}y\cdot x$, 
for all homogeneous $x, y \in H^*_G(M)$. Although a self-contained treatment of graded commutative rings is not 
 immediately available  in the literature,
 there is no essential difference relative to the (usual) commutative case. 
A systematic and thorough study of graded commutative rings has been undertaken  by M.~Poulsen in Appendix A
of his Master Thesis \cite{Po}. 
For instance, by using the remark following
Prop.~A.5 in \cite{Po} and also taking into account that 
 $H^*_G(M)$ is finitely generated as an algebra over its degree zero component 
  $H^0_G(M)\simeq \bQ$ (see for instance \cite[Cor.~2.2]{Quillen}),
   we deduce that $H^*_G(M)$ is a Noetherian ring.
For graded commutative Noetherian rings of the type $R=\oplus_{i\ge 0}R^i$
whose degree zero component $R^0$ is a field, one can see in \cite{Po} that
 the concepts of 
Krull dimension, depth (relative to the ideal $\oplus_{i>0}R^i$),
and Cohen-Macaulayness can be defined in the same way 
 as for commutative rings. This enables us to
prove the following result. It is obtained by adapting 
\cite[Prop.~12, Sect.~IV.B]{Se} to our current set-up.

\begin{lemma} {\rm (J.-P.~Serre)} \label{serre}
Let $R=\oplus_{i\ge 0}R^i$ 
 and $S=\oplus_{i\ge 0}S^i$
be  two graded commutative Noetherian rings with $R^0=S^0=\bQ$
 and $\varphi: R\to S$ a homomorphism of graded rings which makes 
 $S$ into an $R$-module which is finitely generated. 
 Let also $A$ be a finitely generated $S$-module.
 Then $A$ is Cohen-Macaulay as an $S$-module if and only if so is $A$
 as an $R$-module.      
\end{lemma}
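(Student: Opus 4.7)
The plan is to prove separately that the Krull dimension and the depth of $A$ agree whether $A$ is viewed as an $R$-module or as an $S$-module; the equivalence of the two Cohen-Macaulay conditions then follows from the definitions. The crucial input in both arguments is that $\varphi$ makes $S$ into a finitely generated $R$-module while $R^0=S^0=\bQ$: together these force the quotient $S/\varphi(\mathfrak{m}_R)S$ to be a finite-dimensional $\bQ$-algebra, hence Artinian, so that its maximal graded ideal is nilpotent. Consequently $\varphi(\mathfrak{m}_R)S$ and $\mathfrak{m}_S$ have the same radical in $S$, and $\mathfrak{m}_S$ is the unique graded prime of $S$ lying over $\mathfrak{m}_R$.

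For the Krull dimension, note that $\mathrm{Ann}_R A=\varphi^{-1}(\mathrm{Ann}_S A)$ and that $S/\mathrm{Ann}_S A$ is a finitely generated module over $R/\mathrm{Ann}_R A$. A standard going-up together with incomparability argument for module-finite ring extensions---valid in the graded commutative framework of \cite{Po}---then yields $\dim_R A=\dim_R(R/\mathrm{Ann}_R A)=\dim_S(S/\mathrm{Ann}_S A)=\dim_S A$. For the depth, the inequality $\mathrm{depth}_R A\le \mathrm{depth}_S A$ is immediate: any $A$-regular sequence $x_1,\dots,x_n\in\mathfrak{m}_R$ maps to a sequence $\varphi(x_1),\dots,\varphi(x_n)\in\mathfrak{m}_S$ which is still $A$-regular, because the $R$- and $S$-actions on $A$ agree element by element through $\varphi$.

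For the reverse inequality I would appeal to the local-cohomology characterization of depth at the maximal graded ideal. The Koszul complex computing $H^*_{\mathfrak{m}_R}(A)$ over $R$ is built out of elements of $\mathfrak{m}_R$ acting on $A$; since this action factors through $\varphi$, the same complex also computes $H^*_{\varphi(\mathfrak{m}_R)S}(A)$ over $S$, and the radical coincidence above gives $H^*_{\varphi(\mathfrak{m}_R)S}(A)=H^*_{\mathfrak{m}_S}(A)$. Reading off the smallest non-vanishing index forces $\mathrm{depth}_R A=\mathrm{depth}_S A$.

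The main obstacle is verifying that each of the standard commutative-algebra tools invoked above---going-up and incomparability for module-finite extensions, the radical invariance of grade, and the local-cohomology description of depth---carries over to the graded commutative framework of \cite{Po}. These verifications can be sidestepped by localizing at the maximal graded ideals, as is done in the proof of Prop.~\ref{commalg}, whereupon the statement reduces directly to Serre's original Prop.~12, Sect.~IV.B of \cite{Se}.
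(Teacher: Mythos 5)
Your argument is correct and amounts to exactly the adaptation of Serre's Prop.~12, Sect.~IV.B that the paper itself invokes: the paper gives no proof beyond the remark that the lemma is ``obtained by adapting'' that result to the graded setting, and your two comparisons --- of Krull dimension via the module-finite (integral) extension $R/\mathrm{Ann}_R A \hookrightarrow S/\mathrm{Ann}_S A$, and of depth via the identification of the Koszul complexes over $R$ and $S$ together with the radical coincidence $\sqrt{\varphi(\mathfrak{m}_R)S}=\mathfrak{m}_S$ --- are precisely that adaptation. The one point to treat with care is your closing suggestion to sidestep the graded-commutative verifications by localizing at the maximal graded ideals as in Prop.~\ref{commalg}: that proposition deals with genuinely commutative rings such as $H^*(BG)$, whereas the rings to which Lemma~\ref{serre} is applied, e.g.\ $H^*_G(M)$, may contain odd-degree elements, so localization is not immediate and the direct graded argument you give in the main body, carried out in the framework of \cite{Po}, is the route to rely on.
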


\begin{corollary}\label{cmcrit}
The $G$-action on $M$ is Cohen-Macaulay if and only if the ring
$H^*_G(M)$ is Cohen-Macaulay.
\end{corollary}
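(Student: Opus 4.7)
The plan is to apply Lemma \ref{serre} with the specific choice $R=H^*(BG)$, $S=H^*_G(M)$, $A=H^*_G(M)$ (regarded as a module over itself), and $\varphi : H^*(BG)\to H^*_G(M)$ the canonical ring homomorphism coming from the Borel construction. The output of the lemma is then exactly the desired equivalence: $H^*_G(M)$ is Cohen-Macaulay as a module over itself --- which, as for any graded commutative Noetherian ring with field in degree zero, coincides with $H^*_G(M)$ being Cohen-Macaulay as a ring --- if and only if $H^*_G(M)$ is Cohen-Macaulay as a module over $H^*(BG)$, which by Definition \ref{def:cm} is precisely the Cohen-Macaulayness of the $G$-action.

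The bulk of the argument therefore consists in checking that the hypotheses of Lemma \ref{serre} are in force. Both rings are graded commutative Noetherian with degree-zero component $\bQ$: for $H^*(BG)$ this is standard, while for $H^*_G(M)$ the Noetherian property is recorded just before the statement of the lemma and $H^0_G(M)\simeq\bQ$ is likewise noted there (via Quillen \cite{Quillen}). The map $\varphi$ is a homogeneous ring homomorphism of degree zero by construction.

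The one non-trivial hypothesis to verify is that $\varphi$ makes $H^*_G(M)$ into a finitely generated $H^*(BG)$-module. The natural route is through the Leray--Serre spectral sequence of the Borel fibration $M\hookrightarrow EG\times_G M \to BG$: since $M$ is a closed manifold, $H^*(M;\bQ)$ is finite-dimensional, so the $E_2$-page $H^*(BG)\otimes H^*(M)$ is a finitely generated module over the Noetherian ring $H^*(BG)$; the same then holds for every subsequent page, hence for $E_\infty$, and finally for $H^*_G(M)$ itself. (Alternatively one can simply cite the analogous statement in \cite{Quillen} or \cite{AlldayPuppe}.) This finite-generation step is the only real obstacle --- everything else is bookkeeping --- but it is entirely standard.

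Once Serre's lemma is applied with these data, the corollary follows in one line, so no further argument is needed.
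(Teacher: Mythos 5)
Your proposal is correct and is exactly the argument the paper intends: the corollary is stated without proof immediately after Lemma \ref{serre} precisely because it follows by taking $R=H^*(BG)$, $S=A=H^*_G(M)$, and $\varphi$ the canonical map, with the finite generation of $H^*_G(M)$ over $H^*(BG)$ supplied by the standard Serre spectral sequence argument you give. No gaps.
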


   \subsection{The action of $K$ on $G/H$} 
    Let $G$ be again a compact and connected Lie group and let
$K, H\subset G$ be closed subgroups. In this section we list some results concerning the
three group actions mentioned in the following proposition.

\begin{proposition}\label{crit} If  $K$ and  $H$ are  connected,
the following assertions are equivalent:
\begin{itemize}  
\item[(a)] the action of  $K$ on $G/H$ by left translations is Cohen-Macaulay;
\item[(b)] the action of  $H$ on $G/K$ by left translations is Cohen-Macaulay;
\item[(c)] the action of  $H\times K$ on $G$ given by 
\begin{equation}\label{hgk}(h,k)\cdot g=hgk^{-1},\end{equation}
is Cohen-Macaulay.
\end{itemize}
\end{proposition}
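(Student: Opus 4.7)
The plan is to exhibit a single space $X$ whose rational cohomology ring is simultaneously isomorphic to each of $H^*_K(G/H)$, $H^*_H(G/K)$, and $H^*_{H\times K}(G)$ (the latter for the action (\ref{hgk})), and then invoke Cor.~\ref{cmcrit}, which converts the Cohen--Macaulay property of each of the three actions into an intrinsic property of the corresponding equivariant cohomology ring.

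The space I would use is the twisted quotient
$$X := (EH \times EK \times G)/(H \times K),$$
where $(h,k)$ acts by $(e_1, e_2, g) \mapsto (e_1 h^{-1}, e_2 k^{-1}, h g k^{-1})$. Since $EH \times EK$ is a contractible, free $(H\times K)$-space, $X$ is by construction the Borel model for the $H\times K$-action on $G$ given by (\ref{hgk}), so $H^*(X) \cong H^*_{H\times K}(G)$ as graded rings.

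To relate $X$ to case (b), I would carry out the quotient in two stages. Quotienting first by $K$ (which acts trivially on the $EH$-factor) yields $X \cong EH \times_H (EK \times_K G)$, where $H$ acts on the second factor by left translation in $G$. Next, the projection $[e_2, g] \mapsto gK$ realizes $EK \times_K G$ as a fiber bundle over $G/K$ whose fiber $EK$ is contractible; this map is $H$-equivariant and is a weak homotopy equivalence, so passing to associated bundles gives $X \simeq EH \times_H (G/K)$. The target is the Borel model for action (b), hence $H^*(X) \cong H^*_H(G/K)$ as graded rings. An entirely symmetric argument, quotienting $H$ out first, gives $H^*(X) \cong H^*_K(G/H)$.

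Combining the three ring isomorphisms, Cor.~\ref{cmcrit} finishes the proof, since Cohen--Macaulayness is an intrinsic property of the ring. The only point requiring care is that the weak homotopy equivalences above induce \emph{ring} (not merely additive) isomorphisms on cohomology, but this is automatic from functoriality of $H^*(-;\bQ)$. The connectedness of $H$ and $K$ plays only an auxiliary role, ensuring that $H^0$ of each Borel construction equals $\bQ$ so that the hypothesis of Lem.~\ref{serre} underlying Cor.~\ref{cmcrit} is met.
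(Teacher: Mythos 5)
Your proposal is correct and takes essentially the same approach as the paper: both reduce the Borel construction of the $H\times K$-action on $G$ by the freely acting factor to obtain ring isomorphisms $H^*_{H\times K}(G)\simeq H^*_H(G/K)$ and $H^*_{H\times K}(G)\simeq H^*_K(G/H)$, and then conclude via Cor.~\ref{cmcrit}. The only detail elided in your ``entirely symmetric argument'' is that quotienting by $H$ first produces the \emph{right} $K$-action on $H\backslash G$, which must be identified with the left-translation action on $G/H$ via $Hg\mapsto g^{-1}H$ --- a point the paper makes explicit by noting the equivalence with the action $(h,k)\cdot g = kgh^{-1}$.
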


\begin{proof}
For the equivalence between (b) and (c) observe that 
 we have  the ring isomorphisms  $$ H^*_{H\times K}(G)\simeq H^*_H(G/K),$$
 due to the fact that the  factor $K$ of the product $H\times K$ acts freely on $G$.
 The equivalence between (a) and (c) can be proved in a similar way, although
 this time one also needs to take into account that the $H\times K$-action in the lemma 
 is equivalent to the group action $(H\times K)\times G \to G$,
 $(h, k)\cdot g = kgh^{-1}.$
 The result now follows from Cor.~\ref{cmcrit}.
\end{proof}

In the light of item (c) above, the following proposition is useful. It is a direct consequence
of a certain Sullivan model obtained by 
 J.~D.~Carlson and C.-K.~Fok in \cite[Sect.~3.1]{Ca-Fo} (see also \cite[Sect.~8.8.3]{Ca1} or \cite[end of Sect.~2]{Ca3}), which describes the equivariant cohomology  of actions
on $G$ of
type (\ref{hgk}), the acting group being a general subgroup of
$G\times G$.  
 Denote by ${\mathfrak g}, {\mathfrak k}, {\mathfrak h}$ the Lie algebras of $G$, $K$, and $H$.

\begin{proposition}{\rm (J.~D.~Carlson and C.-K.~Fok)} \label{carlson} If $K$ and $H$ are connected, then the equivariant cohomology of the action
of  $H\times K$ on $G$ given by $(h,k).g=hgk^{-1}$ depends only on 
${\mathfrak g}$, ${\mathfrak h}$, ${\mathfrak k}$, 
and the inclusions ${\mathfrak h}\hookrightarrow {\mathfrak g}$ and $ {\mathfrak k}\hookrightarrow  {\mathfrak g}$. 
\end{proposition}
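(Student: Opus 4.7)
The plan is to produce a Sullivan-type (equivalently, Cartan-type) CDGA model for $H^*_{H\times K}(G)$ whose underlying graded algebra and differential are manifestly built only from the Lie-algebraic data $\g, \h, \k$ and the inclusions $\h\hookrightarrow\g$, $\k\hookrightarrow\g$, and then to take cohomology. First I would observe that the action $(h,k)\cdot g = hgk^{-1}$ is the restriction, along $H\times K \hookrightarrow G\times G$, of the two-sided action of $G\times G$ on $G$ whose isotropy at $e$ is the diagonal $\Delta G$; thus the Borel construction $E(H\times K)\times_{H\times K} G$ is a $G$-bundle over $BH\times BK$ pulled back from the universal two-sided bundle.

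For this Borel fibration one has a standard Sullivan/Koszul model. Recall that $H^*(G)\cong \Lambda P_G$ is exterior on the space $P_G$ of primitive elements, and that each $x\in P_G$ transgresses in the universal bundle $G\to EG\to BG$ to a polynomial generator of $H^*(BG)\cong S(P_G[-1])$ via the transgression $\tau: P_G\to H^*(BG)$. The Sullivan model for the Borel fibration above is then of the form
\[
\bigl(H^*(BH)\otimes H^*(BK)\otimes \Lambda P_G,\ d\bigr),
\]
where on an exterior generator $x\in P_G$ the differential is the difference $dx = \varphi_H^*(\tau x)\otimes 1 - 1\otimes \varphi_K^*(\tau x)$, with $\varphi_H^*: H^*(BG)\to H^*(BH)$ and $\varphi_K^*: H^*(BG)\to H^*(BK)$ induced by the two inclusions. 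The difference (rather than some more intricate combination) reflects that the left and right factors of $G\times G$ act on opposite sides in $(h,k)\cdot g = hgk^{-1}$.

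Every ingredient of this model is Lie-algebraic. By Chern--Weil, $H^*(BG)\cong (S\g^*)^\g$ and likewise for $H$ and $K$ (using that all three groups are compact connected, so invariance under the group coincides with invariance under the Lie algebra); the transgression $\tau$ is intrinsic to $\g$; and the pullbacks $\varphi_H^*, \varphi_K^*$ are the restriction homomorphisms on invariant polynomials, determined solely by $\h\hookrightarrow\g$ and $\k\hookrightarrow\g$. Taking cohomology of this CDGA therefore yields an $H^*(BH)\otimes H^*(BK)$-algebra depending only on the data in the statement, proving the proposition.

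The main obstacle is justifying the precise form of the differential for the genuinely \emph{two-sided} action; once one leaves the familiar setting of a one-sided left action on a homogeneous space, establishing the Sullivan model requires more care, because the group acting and the manifold are intertwined in a nontrivial way. This is exactly what Carlson and Fok carry out in \cite[Sect.~3.1]{Ca-Fo} for arbitrary closed subgroups of $G\times G$ acting on $G$ by left-right translations, and one simply invokes their model in the special case where the subgroup is the product $H\times K$.
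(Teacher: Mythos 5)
Your proposal is correct and follows essentially the same route as the paper, which likewise establishes the statement by invoking the Carlson--Fok Sullivan model from \cite[Sect.~3.1]{Ca-Fo} and observing that all of its ingredients ($H^*(BG)\cong (S\g^*)^{\g}$ and its analogues for $\h$ and $\k$, the transgression, and the restriction maps on invariant polynomials) are determined by the Lie-algebraic data. You simply make the shape of that model and its Koszul-type differential explicit, and you correctly identify that the only substantive work --- justifying the model for the two-sided action of a subgroup of $G\times G$ --- is exactly what is outsourced to \cite{Ca-Fo}.
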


Finally, we mention a result that shows how to deal with the situation when  $H$ is not connected.

\begin{proposition}\label{redcon}
Assume $K$ is connected and let $H_0$ be the identity component of $H$.
If the $K$-action on $G/H_0$ is Cohen-Macaulay, then so is the $K$-action on $G/H$.
\end{proposition}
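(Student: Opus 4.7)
The plan is to exploit the finite regular covering $p\colon G/H_0 \to G/H$, whose deck group is $\Gamma:=H/H_0$. Since $H_0$ is normal in $H$, the group $\Gamma$ acts on $G/H_0$ on the right by $[g]\cdot \bar h:=[gh]$; this action is well-defined and free (if $gh\in gH_0$ then $h\in H_0$), and it commutes with the $K$-action by left translations. Moreover $G/H=(G/H_0)/\Gamma$.

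First, I would pass to equivariant cohomology. Applying the Borel construction, the induced map $EK\times_K G/H_0\to EK\times_K G/H$ is a finite regular covering with deck group $\Gamma$, still acting freely and commuting with the projection to $BK$. Because we are using rational coefficients and $\Gamma$ is finite, this yields an identification
\[
H^*_K(G/H)\;\cong\;H^*_K(G/H_0)^{\Gamma}
\]
as graded $H^*(BK)$-modules (the module structure is compatible because $\Gamma$ acts by $H^*(BK)$-module automorphisms).

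Second, I would use the Reynolds operator $e:=\frac{1}{|\Gamma|}\sum_{\gamma\in\Gamma}\gamma$, which is an $H^*(BK)$-linear idempotent on $H^*_K(G/H_0)$ with image $H^*_K(G/H_0)^{\Gamma}$. This produces a splitting
\[
H^*_K(G/H_0)\;=\;H^*_K(G/H)\;\oplus\;\ker(e)
\]
of $H^*(BK)$-modules, so that $H^*_K(G/H)$ is a direct summand of $H^*_K(G/H_0)$ over $H^*(BK)$.

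Third, I would invoke the purely commutative-algebraic fact that a direct summand of a finitely generated Cohen--Macaulay graded module over a Noetherian $*$local ring is itself Cohen--Macaulay. This comes from the two elementary identities $\mathrm{depth}(A\oplus B)=\min(\mathrm{depth}\,A,\mathrm{depth}\,B)$ (via the $\mathrm{Ext}(R/\mathfrak m,-)$ characterization) and $\dim(A\oplus B)=\max(\dim A,\dim B)$ (from $\mathrm{Ann}(A\oplus B)=\mathrm{Ann}(A)\cap\mathrm{Ann}(B)$), combined with the inequality $\mathrm{depth}\le\dim$: together these force $\mathrm{depth}\,A=\dim A=\mathrm{depth}(A\oplus B)$ when $A\oplus B$ is Cohen--Macaulay. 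Applying this with $A=H^*_K(G/H)$ and $A\oplus B=H^*_K(G/H_0)$ (which is Cohen--Macaulay by hypothesis) yields the conclusion.

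The main subtlety is the first step rather than the third: one must check that the $\Gamma$-action exists on $G/H_0$ (requires $H_0\triangleleft H$), is free, commutes with $K$, and that taking $\mathbb Q$-coefficient invariants of the resulting free finite cover on the Borel construction really recovers $H^*_K(G/H)$ with the correct $H^*(BK)$-module structure. Once this identification is in hand, the rest is an immediate consequence of the averaging trick and the direct-summand property of Cohen--Macaulayness.
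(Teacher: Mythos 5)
Your proof is correct, and while it shares the paper's central idea -- averaging over the finite group $H/H_0$ to exhibit the module of interest as a direct summand of a Cohen--Macaulay module -- it takes a genuinely more direct route. The paper first converts the statement via Prop.~\ref{crit}(c) into one about the two-sided $H\times K$- versus $H_0\times K$-actions on $G$; the price is that $H^*_{H\times K}(G)$ and $H^*_{H_0\times K}(G)$ are modules over \emph{different} rings, $H^*(B(H\times K))$ and $H^*(B(H_0\times K))$, so the paper must invoke Serre's change-of-rings result (Lemma~\ref{serre}) together with $H^*(BH)=H^*(BH_0)^{H/H_0}$ before the averaging argument can be applied. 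You instead stay with the $K$-action on the two quotients and use the finite regular cover $G/H_0\to G/H$ with deck group $\Gamma=H/H_0$ (freeness and well-definedness of the $\Gamma$-action indeed rest on $H_0$ being normal in $H$, which holds as it is the identity component), obtaining $H^*_K(G/H)\cong H^*_K(G/H_0)^{\Gamma}$ via the rational transfer; since both sides are modules over the single ring $H^*(BK)$, no change of rings is needed, and the Reynolds-operator splitting plus the elementary depth/dimension computation for direct summands finishes the argument. What your approach buys is the elimination of Lemma~\ref{serre} and of the detour through Prop.~\ref{crit}; what the paper's approach buys is that the topological input is reduced to the purely group-theoretic observation that $H_0\times K$ is the identity component of $H\times K$, with no covering-space or transfer argument required. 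You also make explicit the commutative-algebra fact (a direct summand of a finitely generated Cohen--Macaulay graded module is Cohen--Macaulay) that the paper leaves implicit in its final sentence.
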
 

\begin{proof}
Use the characterization given by Prop.~\ref{crit}, (c).  Note that $H_0\times K$ is the identity component of $H\times K$,
hence 
$H^*_{H\times K}(G)\simeq H^*_{H_0\times K}(G)^{H/H_0}.$
By hypothesis,  $H^*_{H_0\times K}(G)$ is  Cohen-Macaulay as a module
over the ring $H^*(BH_0 \times BK)$, thus also over its subring $H^*(BH\times BK)=H^*(BH_0 \times BK)^{H/H_0}$,
since the  ring is a finitely generated module over the subring and one can use Lemma \ref{serre}. 
By a standard averaging argument, the invariant module  $H^*_{H_0\times K}(G)^{H/H_0}$
can be realized as a direct summand of $H^*_{H_0\times K}(G)$ in the category of 
$H^*(BH \times BK)$-modules, hence it is Cohen-Macaulay.  
\end{proof}

\section{Hermann actions}\label{semisimple}
In this section we will prove Thm.~\ref{main}. 
The meaning of $G$, $H$, and $K$ is as in  Sect.~\ref{intro}.
In particular, the Lie group $G$ is  compact, connected, and semisimple
(the case when $G$ is not necessarily semisimple is discussed in Rem.~\ref{nonsemi}).  
Due to Propositions \ref{crit} and  \ref{redcon}, we only need to show that the equivariant cohomology ring of the $H_0 \times K$-action on $G$ is 
Cohen-Macaulay. By Prop.~\ref{carlson}, we lose no generality by assuming that $G$ is simply connected, since otherwise we can replace the triple 
$G$, $H_0$, $K$ with the universal cover of $G$  
along with its two connected and closed subgroups that are locally isomorphic to $H_0$ and $K$ respectively.  
Under the aforementioned assumption, the fixed point sets of $\sigma$ and $\tau$ are connected,
 see \cite[Thm.~1.8, Ch.~VII]{He}, and thus equal to $H_0$ and $K$ respectively. 

\subsection{The case when $G$ is simple}\label{sec:simple}

   We will prove Thm.~\ref{main} under the assumption that $G$ is simple and simply connected,
   which is valid throughout the whole subsection. The induced automorphisms of $\g$ will also be denoted by $\s$ and $\t$. 
In general, we will not make any notational distinction between an automorphism of $G$ and the
induced Lie algebra automorphism.

   \begin{lemma} \label{torus} Assume $G$ is not equal to ${\rm Spin}(8)$. If ${\rm rank} \ \!  K \le  {\rm rank} \ \! H$, then a maximal torus
   in $K$  is group-conjugate with a subgroup of $H$.
   \end{lemma}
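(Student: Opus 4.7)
The plan is to produce a maximal torus $T_K \subset K$ and $g \in G$ with $gT_Kg^{-1} \subset H$; equivalently, to find an involution $\sigma'$ in the $G$-conjugacy class of $\sigma$ whose Lie-algebra fixed set contains $\mathfrak{t}_K$. First I would reduce to the case in which both $\sigma$ and $\tau$ are outer automorphisms of $G$. If $\sigma$ is inner, then $H$ contains a maximal torus of $G$; any torus of $G$ (in particular $T_K$) is then $G$-conjugate into a maximal torus of $H$. If on the other hand $\tau$ is inner but $\sigma$ is outer, then ${\rm rank}\,K = {\rm rank}\,G$ while ${\rm rank}\,H < {\rm rank}\,G$ (the latter because an outer involution acts nontrivially on every invariant maximal torus of $G$), contradicting the rank hypothesis.

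With both involutions outer, Cartan's classification restricts $G$ to type $A_n$, $D_n$, or $E_6$; and since $G \neq {\rm Spin}(8) = D_4$, we are left to handle the subtypes $A_n$, $D_n$ with $n \neq 4$, and $E_6$. I would dispatch these in turn. In type $A_n = {\rm SU}(n+1)$, the outer involutions yield $\{H,K\} \subset \{{\rm SO}(n+1), {\rm Sp}((n+1)/2)\}$ (the second option only when $n$ is odd); when $H$ and $K$ are $G$-conjugate the conclusion is immediate, and otherwise a single permutation matrix in ${\rm SU}(n+1)$ conjugates the standard maximal torus of ${\rm Sp}((n+1)/2)$ onto the diagonalized form of the maximal torus of ${\rm SO}(n+1)$. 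In type $D_n$ with $n \neq 4$, the outer involutions give subgroups ${\rm Spin}(p) \cdot {\rm Spin}(2n-p)$ with $p$ odd, each of rank $n-1$, and the signed-permutation Weyl group $W(D_n)$ conjugates the block-diagonal maximal torus of any one of these subgroups into that of any other. In type $E_6$ the outer involutions produce ${\rm Sp}(4)$ and $F_4$, both of rank $4$, and the assertion follows from the classical embedding $F_4 \hookrightarrow E_6$ together with the identification of a common Cartan subalgebra.

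The main difficulty is the $E_6$ verification, where one has to use the explicit root-system description of the two classes of outer involutions in order to see that the corresponding Cartan subalgebras of ${\rm Sp}(4)$ and $F_4$ inside $E_6$ are actually $W(E_6)$-conjugate; this is standard but requires a direct inspection. The exclusion of ${\rm Spin}(8)$ is essential: triality produces three distinct conjugacy classes of outer involutions in ${\rm Spin}(8)$, all with fixed set isomorphic to ${\rm Spin}(7)$, and the maximal tori of the corresponding subgroups are interchanged only by triality (hence by outer automorphisms), not by inner conjugation in ${\rm Spin}(8)$ itself.
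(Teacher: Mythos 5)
Your route is genuinely different from the paper's. After the reduction to the case where both $\sigma$ and $\tau$ are outer (your reduction is correct and coincides with the paper's, which simply cites Helgason for the inner case), the paper gives a single uniform argument with no appeal to the classification: it chooses a $\tau$-invariant maximal torus $T\supset T_K$ with a $\tau$-invariant Weyl chamber, conjugates by some $c$ so that $S=c(T)$ contains a maximal torus of $H$ and carries a $\sigma$-invariant chamber, moves the two chambers onto a common one by the Weyl group of $(G,S)$, and then observes that both resulting involutions act on that chamber by the \emph{unique} nontrivial diagram automorphism of $G\neq{\rm Spin}(8)$. Hence their composition fixes the Cartan subalgebra pointwise and is conjugation by an element of $S$, which forces $c(T_K)$ to lie in a conjugate of $H$. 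Your classification-based alternative is viable, but it has a real gap exactly where this uniform step does the work: the $E_6$ case. There you assert that the Cartan subalgebras of the $F_4$ and ${\rm Sp}(4)$ fixed-point subgroups are $W(E_6)$-conjugate and concede that this ``requires a direct inspection'' without performing it. Since both subgroups have rank $4$ inside the rank-$6$ group $E_6$, equality of ranks proves nothing (two $4$-dimensional subtori of a $6$-torus need not be conjugate), so this is precisely the point that demands an argument; in your $A_n$ and $D_n$ cases you do supply the needed explicit conjugation, but for $E_6$ you do not. As written, the $E_6$ case is an assertion rather than a proof.

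Two smaller points. You work throughout with standard models of the fixed-point subgroups, so you should note that $H$ and $K$ may be replaced by conjugates of these models (harmless, as the conclusion is itself a statement up to conjugacy). Your explanation of why ${\rm Spin}(8)$ must be excluded is correct and is the same phenomenon the paper's proof isolates: for $D_4$ the diagram automorphism induced by an outer involution is no longer unique, so two outer involutions need not differ by an inner automorphism, and the maximal tori of non-conjugate ${\rm Spin}(7)$-subgroups are indeed not conjugate in ${\rm Spin}(8)$. If you either carry out the $E_6$ verification or replace the case analysis by the chamber argument above, the proof is complete.
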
 
       \begin{proof} If one of $\sigma$ or $\tau$  is an inner automorphism, the claim in the lemma is clear by \cite[Thm.~5.6, p.~424]{He}.   
From now on we will assume that both $\sigma$ and $\tau$ are outer automorphisms of $G$.       
       Let $T_K\subset K$ be an arbitrary maximal torus.
       There is a unique maximal torus $T$ in $G$ such that $T_K\subset T$;
      furthermore, $T$ is $\tau$-invariant and there is a Weyl chamber $C$ in ${\mathfrak t}:={\rm Lie}(T)$ which is
       $\tau$-invariant as well
       (see \cite[Prop.~3.2, p.~125]{Lo}). Let $c: G \to G$ be an inner automorphism
       such that the torus $S:=c(T)$ contains $T_H$, the latter being a maximal torus in $H$.
       As before, there exists a Weyl chamber $C'\subset c({\mathfrak t})$ 
       which is invariant under $\sigma$. On the other hand, $c\tau c^{-1}$ 
       is an involutive automorphism of $G$ which leaves  $S$ invariant;
       it even leaves the chamber $c(C)$ (inside the Lie algebra of $S$)  invariant.
       But the chambers $c(C)$ and $C'$ are conjugate under the Weyl group of $(G,S)$; that is, there exists  an inner automorphism, say $c'$,
        such that $$c'(S) =S \ {\rm and} \ c'(C') = c(C).$$
       We now compare 
       the involutions $c\tau c^{-1}$ and $c'\sigma c'^{-1}$ of $G$:
       they both leave the torus $S$ invariant, and along with it, its Lie algebra and the chamber $c(C)$ inside it:
       both are realized in terms of permuting the simple roots that determine the chamber, the permutation being
       necessarily a Dynkin diagram automorphism.
       
       Since $G$ is different from ${\rm Spin}(8)$, there is a unique such (involutive) permutation which is not the identity map.  
       But none of the automorphisms $c\tau c^{-1}$  and $c'\sigma c'^{-1}$ is inner, thus by composing them
       and then restricting the result to the chamber
       $c(C)$, one gets the identity map;  as an automorphism of $G$, this composition must consequently be an inner automorphism
       $c_g$ defined by some $g\in S$. That is,
       $$c\tau c^{-1}=c'\sigma c'^{-1} c_g.$$
       Notice that the automorphism in the left hand side of the previous equation leaves $c(T_K)\subset S$ pointwise fixed.
       Consequently, $c'\sigma c'^{-1}$ does the same.
       In other words, $c(T_K)$ is contained in the fixed point set of $c'\sigma c'^{-1}$,
       which is just $c'(H)$. This finishes the proof.
             \end{proof}

 \noindent{\it Proof of Thm.~\ref{main} in the case $G$ simple.} First assume that $G$ is different from ${\rm Spin}(8)$.
By Prop.~\ref{crit}, we may assume that ${\rm rank} \ \!  K \le  {\rm rank} \ \! H$. Thus, by Lemma \ref{torus}, there exists
 $T\subset K$ a maximal torus and $g_0\in G$ such that $g_0Tg_0^{-1}\subset H$.
 The actions $K \times G/H\to G/H$ and 
 $g_0Kg_0^{-1} \times G/H\to G/H$ given by left translation are equivalent, relative to the map
 $$(K, G/H)  \to (g_0Kg_0^{-1}, G/H), \quad (k, gH) \mapsto (g_0kg_0^{-1}, g_0gH).$$
 Thus it is sufficient to show that the $g_0Kg_0^{-1}$-action on $G/H$ is Cohen-Macaulay.
 Equivalently, by means of Prop.~\ref{maxtor} above, that the action of $g_0Tg_0^{-1}$ on $G/H$ is so.
 But this is clear, because, by the main result in \cite{Go}, the action of
 $H$ on $G/H$ is equivariantly formal. 
 
If $G={\rm Spin}(8)$, then, as in the proof of Lemma \ref{torus}, we can assume that both $\sigma$ and $\tau$ 
 are outer automorphisms. It is known that the group of outer automorphisms of ${\rm Spin}(8)$ is isomorphic 
 to the symmetric group $\Sigma_3$ on three letters and the fixed point set of any order-two outer automorphism 
 is isomorphic to ${\rm Spin}(7)$, giving rise to a copy of the symmetric space ${\rm Spin}(8)/{\rm Spin}(7) = S^7$ (cf.~\cite[Sect.~2]{Ad1}). Thus Thm.~\ref{main} is a consequence of Lemma \ref{odd} below.
 $\square$
 
 \begin{lemma}\label{odd} Any smooth action of a compact connected Lie group on a closed manifold which is a rational
 cohomology  sphere is Cohen-Macaulay. 
 \end{lemma}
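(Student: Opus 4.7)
The plan is to compute $H^*_G(M)$ directly by running the Serre spectral sequence of the Borel fibration $M \hookrightarrow EG\times_G M \to BG$, exploiting the fact that a rational cohomology sphere has a very simple cohomology ring. Write $H^*(M;\bQ) \cong \bQ[\alpha]/(\alpha^2)$ with $\deg\alpha = n$ (the case $n=0$, where $M$ is two points with trivial $G$-action, is immediate). The $E_2$ term $H^*(BG;\bQ)\otimes H^*(M;\bQ)$ is concentrated in the two rows $q=0,n$ and, since $G$ is connected, only in even columns. The only differential that can be nonzero is $d_{n+1}\colon E_{n+1}^{\ast,n}\to E_{n+1}^{\ast+n+1,0}$, which is multiplication by a single class $\omega:=d_{n+1}(1\otimes\alpha)\in H^{n+1}(BG;\bQ)$.

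When $n$ is even, $H^{n+1}(BG;\bQ)=0$ and hence $\omega=0$; more generally, whenever $\omega=0$ the spectral sequence collapses and $H^*_G(M)\cong H^*(BG;\bQ)\otimes H^*(M;\bQ)$ is free of rank two over $H^*(BG;\bQ)$, hence Cohen-Macaulay (in fact equivariantly formal). The only remaining case is $n$ odd with $\omega\neq 0$.

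For that case I would invoke the Chevalley-Borel theorem: $H^*(BG;\bQ)$ is a polynomial ring in finitely many generators of positive even degree, in particular an integral domain. Thus $\omega$ is a non-zero-divisor, so $d_{n+1}$ is injective on row $n$ and has image the principal ideal $(\omega)$ on row $0$; at $E_\infty$ only row $0$ survives, equal to $H^*(BG;\bQ)/(\omega)$. Because $E_\infty$ is concentrated in a single row, the filtration on $H^*_G(M)$ is trivial and we obtain $H^*_G(M)\cong H^*(BG;\bQ)/(\omega)$ as $H^*(BG;\bQ)$-modules. Since $\omega$ is a regular element of the Cohen-Macaulay polynomial ring $H^*(BG;\bQ)$, the quotient is Cohen-Macaulay as a module over it (depth and Krull dimension each drop by exactly one), so the $G$-action is Cohen-Macaulay by Definition \ref{def:cm}.

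There is essentially no serious obstacle in this argument; the one point warranting a moment of care is that the spectral sequence a priori only recovers the associated graded of $H^*_G(M)$ as an $H^*(BG;\bQ)$-module, but the fact that $E_\infty$ lies in a single row makes the filtration trivial and eliminates any extension problem. An equivalent route would be to first reduce to a maximal torus via Prop.~\ref{maxtor}(b) and carry out the same spectral sequence computation with $H^*(BT;\bQ)=\bQ[u_1,\ldots,u_r]$, but this is not necessary since $H^*(BG;\bQ)$ is already polynomial.
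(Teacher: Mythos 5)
Your argument is correct, and it takes a genuinely different route from the paper. You compute $H^*_G(M)$ directly from the Serre spectral sequence of the Borel fibration: since $BG$ is simply connected and $E_2$ has only the two rows $q=0,n$, everything is governed by the single transgression class $\omega=d_{n+1}(1\otimes\alpha)\in H^{n+1}(BG)$, and the two outcomes --- $H^*_G(M)$ free of rank two when $\omega=0$, or $H^*_G(M)\cong H^*(BG)/(\omega)$ when $\omega\neq 0$ (using that $H^*(BG)$ is a polynomial ring, hence a domain, so $\omega$ is regular) --- are both Cohen--Macaulay modules over $H^*(BG)$. Your handling of the extension problem is the right point to flag, and in the nontrivial case the single surviving row does make it vacuous; in the collapsed case the extension $0\to E_\infty^{*,0}\to H^*_G(M)\to E_\infty^{*,n}\to 0$ splits because the quotient is free. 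The paper instead stays within the machinery it develops elsewhere: it disposes of even-dimensional spheres via Ex.~\ref{exodd}, reduces to a maximal torus $T$ by Prop.~\ref{maxtor}(b), and then splits into cases according to whether $X^T$ is empty. When $X^T\neq\emptyset$ it combines the Betti-number bound of Prop.~\ref{criteqf} with the equality of Euler characteristics $\chi(X)=\chi(X^T)$ to force $\dim H^*(X^T)=2$ and hence equivariant formality; when $X^T=\emptyset$ it finds a locally freely acting circle $S\subset T$, identifies $H^*(X/S)$ with $H^*(\bC P^n)$ via a Gysin sequence, and concludes through $H^*_T(X)\cong H^*_{T/S}(X/S)$ and Cor.~\ref{cmcrit}. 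Your proof is more self-contained and yields the precise $H^*(BG)$-module structure of $H^*_G(M)$ (in particular it never needs the torus reduction or any geometric input about fixed-point sets and quotients); the paper's proof trades that explicitness for consistency with the torus-theoretic toolkit it uses throughout, which is also the form in which the lemma's ideas reappear in the proof of Lemma~\ref{prop:orbit}.
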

 
 \begin{proof} If the dimension of the sphere is even, the lemma is a consequence of Ex.~\ref{exodd}. To deal with the remaining situation, we use 
 Prop.~\ref{maxtor} (b), which enables us to only consider  the action of a torus 
 $T$ on $X$, where $X$ is a closed manifold with $H^*(X)\simeq H^*(S^{2n+1})$ as vector spaces, for some $n \ge 0$.
 In particular, the total Betti number $\dim H^*(X)$ is equal to 2. In case the fixed point set $X^T$
  is non-empty, the latter is a union of closed submanifolds of $X$.   
  By Prop.~\ref{criteqf},  $\dim H^*(X^T)$  
is thus a strictly positive  number at most equal to 2. On the other hand, the Euler-Poincar\'e characteristics of $X$ and $X^T$
are equal, see e.g.~\cite[Thm., (4)]{Kobayashi} or \cite[Thm.~9.3]{Go-Zo}; thus $\dim H^*(X^T)$ is an even number. We conclude that the latter number is equal to 2, hence the $T$-action on $X$  is equivariantly formal and consequently Cohen-Macaulay.  

 Let us now consider the case when the set $X^T$ is empty.
 There exists a one-dimensional subtorus $S\subset T$ whose action on $X$ is locally free.
 One can compute the cohomology of the orbit space $X/S$ by means of the following version of
 the Gysin sequence (cf.~\cite[Lemma 2.2]{Co}):
 $$\ldots \to H^j(X/S) \to H^{j+2}(X/S) \to H^{j+2}(X) \to H^{j+1}(X/S) \to \ldots .$$
 It follows that $H^*(X/S)\simeq H^*(\bC P^n)$ by an isomorphism of vector spaces. 
 Consequently, by Ex.~\ref{exodd}, the canonical action of $T/S$ on $X/S$ is equivariantly formal,
 hence, by Cor.~\ref{cmcrit}, the ring $H^*_{T/S}(X/S)$ is Cohen-Macaulay. 
 To conclude the proof, it only remains to notice that we have the ring isomorphism 
 $$H^*_T(X)\simeq H^*_{T/S}(X/S).$$
 \end{proof}
 
 \subsection{The case when $G$ is not simple}
 From now on, $G$ is just simply connected, not necessarily  being simple. 
Our main tool in dealing with this situation is the following result, see \cite[Prop.~18]{Ko}:

\begin{proposition}{\rm (A.~Kollross)}\label{andreas} If $G$ is simply connected then the $H$-action on $G/K$ is
a direct product of actions of one of the following types:
\begin{itemize}
\item[(i)] the action of $H'\times L^{n-1} \times K'$ on $L^n$ defined by
\begin{align*}
{}&(h, g_1, \ldots, g_{n-1}, k)\cdot(x_1, \ldots, x_n)\\{}&=
(hx_1g_1^{-1}, g_1x_2g_2^{-1}, \ldots, g_{n-2}x_{n-1}g_{n-1}^{-1}, g_{n-1}x_n k^{-1}),\end{align*}
\item[(ii)] the action of $H'\times L^{n-1}$ on $L^{n-1}\times L/K'$ defined by
\begin{align*}
{}&(h, g_1, \ldots, g_{n-1})\cdot(x_1, \ldots, x_{n-1}, x_nK')\\{}&=
(hx_1g_1^{-1}, g_1x_2g_2^{-1}, \ldots, g_{n-2}x_{n-1}g_{n-1}^{-1}, g_{n-1}x_n K'),\end{align*}
\item[(iii)] the action of $L^{n-1}$ on $H'\backslash L\times L^{n-2}\times L/K'$ defined by
\begin{align*}{}&(g_1,\ldots, g_{n-1})\cdot (H'x_1, x_2, \ldots, x_{n-1}, x_n K')
\\{}&=(H'x_1g_1^{-1}, g_1x_2g_2^{-1}, \ldots, g_{n-2}x_{n-1}g_{n-1}^{-1}, g_{n-1}x_n K'),\end{align*}
\item[(iv)] the action of $L^n$ on $L^n$ defined by
\begin{align*}{}&(g_1, \ldots, g_n)\cdot(x_1,\ldots, x_n)\\{}&
=(g_1x_1g_2^{-1}, g_2x_2g_3^{-1}, \ldots, g_{n-1}x_{n-1}g_n^{-1}, g_n x_n\alpha(g_1)^{-1}),\end{align*}
\end{itemize}
where $L$ is a simply connected, simple and compact Lie group, $H',K'\subset L$ are fixed points of 
involutions of $L$,  $\alpha$ is an outer or trivial automorphism of $L$, and $n$ is an arbitrary integer,
at least equal to 1 in cases (i), (ii), and (iv), respectively to 2 in case (iii).
\end{proposition}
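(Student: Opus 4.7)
The plan is to reduce the proposition to a combinatorial analysis of how the involutions $\sigma, \tau$ interact with the product decomposition of $G$. Since $G$ is compact and simply connected, it splits uniquely as $G = L_1 \times \cdots \times L_r$ with each $L_i$ simple, compact and simply connected. Every automorphism of $G$ permutes these factors up to isomorphism, so $\sigma$ and $\tau$ induce involutive permutations of $\{1, \ldots, r\}$ together with automorphism data on each orbit.

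First I would encode this data in an edge-two-coloured multigraph $\Gamma$ on the vertex set $\{1, \ldots, r\}$: draw a $\sigma$-coloured edge for every pair $\{i, j\}$ swapped by $\sigma$, and likewise for $\tau$; at a vertex $i$ fixed by $\sigma$, record the restriction $\sigma|_{L_i}$ as an involution of $L_i$, and analogously for $\tau$. Since $\sigma^2 = \tau^2 = \operatorname{id}$, every vertex is incident to at most one $\sigma$-edge and at most one $\tau$-edge, so the connected components of $\Gamma$ are either isolated vertices, paths with alternating colours, or cycles with alternating colours (necessarily of even length).

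Each component $C$ yields a $\sigma$- and $\tau$-invariant subproduct $G_C := \prod_{i \in C} L_i$, and $G = \prod_C G_C$ is preserved by both involutions. Consequently $H = \prod_C H_C$ and $K = \prod_C K_C$ with $H_C := (G_C)^\sigma_0$ and $K_C := (G_C)^\tau_0$, and the Hermann action of $H$ on $G/K$ becomes the direct product of the Hermann actions of $H_C$ on $G_C/K_C$. This reduces the proposition to inspecting a single connected component.

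The main work, which I expect to be the hardest step, is to match each component to one of the four models. For a cycle of length $n$, one parametrises $G_C = L^n$ so that the alternating $\sigma/\tau$-swaps match the cyclic pattern in case (iv); the outer automorphism $\alpha$ appears precisely when the identification closing up the cycle is twisted. For a path component with $n$ vertices, one must determine whether each endpoint factor also carries its own $\sigma|_L$ or $\tau|_L$ involution (``capped'') or not. The three possibilities --- both endpoints capped, exactly one endpoint capped, or neither --- translate, respectively, into cases (i), (ii), and (iii), with capped endpoints contributing the factors $H'$ and $K'$ to the acting group and uncapped endpoints contributing the double-coset factors $H'\backslash L$ and $L/K'$ to the manifold. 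Verifying that the indexing of the factors matches the explicit formulas in Proposition \ref{andreas} is a careful but essentially mechanical computation, and is the content of \cite[Prop.~18]{Ko}.
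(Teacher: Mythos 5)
The paper does not actually prove this proposition: it is quoted from Kollross, the ``proof'' being the citation to \cite[Prop.~18]{Ko} together with the subsequent Remark, which only explains (a) why the simply connected hypothesis upgrades the \emph{local} conjugacy of Kollross's general statement to genuine conjugacy, and (b) what the integer $n$ means. Your proposal instead sketches the combinatorial argument underlying Kollross's result, so it takes a genuinely different (more self-contained) route. The strategy is sound and is essentially the right one: since $G$ is simply connected, every automorphism permutes the simple factors, the two involutive permutations induced by $\sigma$ and $\tau$ define a two-coloured multigraph whose components are alternating paths and even alternating cycles, each component gives a $\sigma$- and $\tau$-invariant subproduct, and (using simple connectivity again, so that $H=G^\sigma$ and $K=G^\tau$ split as products over components) the Hermann action factors accordingly. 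Cycles give case (iv), paths give cases (i)--(iii). Working directly with the simply connected $G$ also lets you bypass the local-versus-global conjugacy issue that the paper has to address when importing Kollross's more general statement.

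Two points in your sketch deserve tightening. First, the trichotomy for path components is misstated: an endpoint of a path with at least two vertices is incident to exactly one coloured edge, hence is \emph{always} fixed by exactly one of the two permutations and always carries an induced involution (possibly trivial); the correct case distinction is whether that involution is $\sigma$ (so the cap $H'$ or $K'$ sits in the acting group, cases (i) and (ii)) or $\tau$ (so it produces a coset factor $H'\backslash L$ or $L/K'$ in the manifold, cases (ii) and (iii)) -- not whether a cap exists. Your final sentence suggests you have the right picture, but ``capped or not'' is the wrong dichotomy. Second, the step you call mechanical is where the twisting isomorphisms live: when $\sigma$ or $\tau$ swaps two factors it does so via some isomorphism, and one must choose identifications of all factors in a component with a single model $L$ and show that all twists can be absorbed into reparametrizations except for the single monodromy $\alpha$ around a cycle (which can further be normalized to be outer or trivial). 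Since you defer exactly this to \cite[Prop.~18]{Ko}, your argument is, like the paper's, ultimately a citation at its crux -- which is acceptable here, but should be acknowledged as such rather than described as a verification you have carried out.
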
 

For the reader's convenience, here are a few details concerning this result.

\begin{remark} {\rm (a) Proposition \ref{andreas} above is just a consequence of the result actually proved in \cite{Ko}. 
Namely, we are here in the special case when $G$ is simply connected. Thus our $H$-action on $G/K$ is of the Hermann type in the sense of 
\cite[Def.~15]{Ko} (see the definition of locally symmetric subgroups at the beginning of \cite[Sect.~2]{Ko} and also recall the well-known fact  that the identity component of the isometry group of the symmetric space $G/K$ is just $G$). Under the hypothesis mentioned above, $G/K$ is simply connected as well, hence the $H$-action on $G/K$ is not only locally conjugate to one of the types (i)-(iv), as the result in \cite{Ko} says, but genuinely conjugate. 

(b) We also wish to explain the meaning of the number $n$ in the proposition above. It comes from the structure of $G/K$. 
For example, in case (i) the pair $(G, K)$ is equal to $(L \times \cdots \times L, \Delta(L)\times \cdots \times 
\Delta(L))$, where the first direct product has $2n$ factors, the second has $n$, and $\Delta(L) :=\{(g, g) | g\in L\}$.     
The cases (ii)-(iv) are left as an exercise to the reader.}
\end{remark}

In view of Cor.~\ref{dirprod}, to prove Thm.~\ref{main} for $G$ semisimple it is sufficient to show that the actions (i)-(iv) are 
Cohen-Macaulay.

Let us start with  (i). We use an inductive argument. 
Note that the first factor $L$ in $H'\times L^{n-1}\times K'$ acts freely  on $L^{n}$, the orbit space
being diffeomorphic to $L^{n-1}$ via
$$L^{n}/L \to L^{n-1}, \quad (x_1, x_2, \ldots,  x_{n}) \mapsto (x_1 x_2, x_3 \ldots,  x_{n}).$$
Thus it is sufficient to prove that the action of $H'\times L^{n-2}\times K'$ on $L^{n-1}$ given by
\begin{align*}
{}&(h, g_2, \ldots, g_{n-1}, k)\cdot(x_2, \ldots, x_n)\\{}&=
(hx_2g_2^{-1}, g_2x_3g_3^{-1}, \ldots, g_{n-2}x_{n-1}g_{n-1}^{-1}, g_{n-1}x_n k^{-1}),\end{align*}is Cohen-Macaulay.
We continue the procedure and gradually drop out the $L$-factors in $H'\times L^{n-1}\times K'$  until we finally obtain the  action of $H'\times K'$ on $L$ given by $(h, k)\cdot x=hxk^{-1}$.
But this action is Cohen-Macaulay by the result already proved in Subsect.~\ref{sec:simple}
(see also Prop.~\ref{crit}).  

To deal with (ii),  we start by modding out the action of the first factor $L$ in $H'\times L^{n-1}$, which is clearly a free action. 
In this way, we reduce the problem to  showing that the action of  $H'\times L^{n-2} $ on
$L^{n-2}\times L/K'$ described by
\begin{align*}
{}&(h, g_2, \ldots, g_{n-1})\cdot(x_2, \ldots, x_{n-1}, x_nK')\\{}&=
(hx_2g_2^{-1}, g_2x_3g_3^{-1}, \ldots, g_{n-2}x_{n-1}g_{n-1}^{-1}, g_{n-1}x_n K'),\end{align*}is Cohen-Macaulay. We continue the procedure until we are led to the action of
$H'$ on $L/K'$ given by left translations. 
Again, this action is Cohen-Macaulay by the result we proved in Subsect.~\ref{sec:simple}.

Similarly, in case (iii) we reduce the problem to the action of
$L$ on $H'\backslash L \times L/K'$ described by $g\cdot (H'x_1, x_2K')=(H'x_1g^{-1}, gx_2K')$.
The  map $$H'\backslash L \times L/K'\to L/H'\times L/K', \quad (H'x_1, x_2K')\mapsto (x_1^{-1}H', K'x_2)$$
is an $L$-equivariant diffeomorphism. This allows us to change our  focus to  the action of $\Delta(L):=\{(g,g)\mid g\in L\}$
on $(L\times L)/(H'\times K')$ by left translations. By Prop.~\ref{crit}, this  is 
Cohen-Macaulay if and only if so is the action of $H'\times K'$ on $(L\times L)/\Delta(L)$.
But the latter is just the action of $H'\times K'$ on $L$ given by
$(h, k)\cdot x=hxk^{-1}$, which was discussed in Subsect.~\ref{sec:simple}.

 As about (iv), the recursive procedure already used in each of the previous cases 
 leads us to the action of $L$ on itself given by $g\cdot x=gx\alpha(g)^{-1}$. 
 It was proved by Baird in \cite[p.~212]{Ba} (cf.~also \cite[p.~58]{Sieb}) that all isotropy groups of this action
have the same rank. 
By \cite[Cor.~4.3]{Go-Ro}, the action is thus Cohen-Macaulay.

\begin{remark}\label{nonsemi} {\rm Thm.~\ref{main} holds even when $G$
is not necessarily semisimple. To prove this,  consider a finite cover of $G$ of the type $T\times G_s$, where
$T$ is a torus and $G_s$ is compact, connected, and simply connected.
In view of Propositions~\ref{crit}, \ref{carlson}, and \ref{redcon} it is sufficient to
consider the case when $G$ is equal to such a direct product and $H$ is the identity 
component of $G^\tau$. 
But then both $\sigma$ and $\tau$ leave the factors $T$ and $G_s$ invariant.
Their fixed point sets split as direct products of subgroups of $T$ and $G_s$ respectively;
that is, $H_0=T_1\times H_s$ and 
$K=T_2\times K_s$, where $T_1, T_2\subset T$ are subtori and $H_s, K_s \subset G_s$. 
The $H_0\times K$-action on $G$ can be described as follows:
$$\left((t_1, h), (t_2, k)\right). \left(t, g\right) = \left(t_1tt_2^{-1}, hgk^{-1}\right).$$
This is the direct product of the following two actions:
\begin{align*}{}&(T_1 \times T_2) \times T \to T, \  (t_1, t_2)\cdot t =t_1tt_2^{-1}\\
{}& (H_s \times K_s)\times G_s \to G_s, \ (h, k)\cdot g = hgk^{-1}.
\end{align*}
In view of Cor.~\ref{dirprod}, it is sufficient to observe that both of them are Cohen-Macaulay: 
for the first factor, one observes that the kernel of the $T_1 \times T_2$-action on $T$ is isomorphic 
to $T_1\cap T_2$, and an arbitrary direct complement of its  identity component acts locally freely;
  for the second factor, one uses Thm.~\ref{main}. } 
\end{remark}

   

We conclude the section with a  remark concerning  equivariant formality of Hermann actions:

\begin{remark}
{\rm  Let $G, K$, and $H$ be as in Thm.~\ref{main}. The action of $K$ on $G/H$ is equivariantly formal if and only if a maximal torus 
in $K$ is conjugate with a subgroup of $H$ (note that the latter condition is equivalent to the existence of a fixed point for the action of the maximal torus in $K$,
which is necessary for the equivariant formality of the $K$-action; the other implication is proved by invoking the main result of \cite{Go} as in the proof of Thm.~\ref{main} in the case $G$ simple). In 
particular, ${\rm rank} \ \! K \le {\rm rank } \ \! H$. We note that the latter condition alone is not sufficient for equivariant formality. Take for instance two Dynkin diagram involutions of ${\rm Spin}(8)$ which are not conjugate with each other.
Their fixed point sets, $H$ and $K$ respectively, are both isomorphic to ${\rm Spin}(7)$. 
It turns out that the action of $H$ on ${\rm Spin}(8)/K\simeq S^7$ by left translations is transitive, with isotropy subgroups
isomorphic to the exceptional compact Lie group of type $G_2$, see e.g.~\cite[Thm.~3]{Va}. Thus this action is not equivariantly formal.
It is interesting to notice, however, that if $G$ is simple and simply connected,
$G\neq {\rm Spin}(8)$, then the condition ${\rm rank} \ \! K \le {\rm rank } \ \! H$ is sufficient
for the $K$-action on $G/H$ to be equivariantly formal: this follows from Lemma \ref{torus} above. } 
\end{remark}

\section{Orbit equivalent actions}\label{oea}

We start by proving the following two lemmata, whose relevance will become clear immediately:
\begin{lemma}\label{prop:orbit}
Let $M$ be a closed manifold and $K, K'$ two compact and connected Lie groups
that act on $M$ such that $K\subseteq K'$ and $Kp=K'p$ for all $p\in M$.
Then the $K$-action on $M$ is Cohen-Macaulay if and only if so is the $K'$-action.
\end{lemma}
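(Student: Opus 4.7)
My plan rests on identifying $M_K$ with a natural homotopy pullback. The $K'$-equivariant diffeomorphism
$$
(K'/K)\times M \;\longrightarrow\; K'\times_K M, \qquad (gK,\,m) \;\longmapsto\; [g,\,g^{-1}m]
$$
(with $K'$ acting diagonally on the source and by $k'\cdot[g,m] = [k'g,m]$ on the target) allows one, upon applying the Borel functor $EK'\times_{K'}(-)$, to identify the canonical map $\pi\colon M_K\to M_{K'}$ with the projection of the homotopy pullback
$$
\xymatrix{
M_K \ar[r]^{\pi} \ar[d] & M_{K'} \ar[d] \\
BK \ar[r] & BK',
}
$$
in which the horizontal maps are locally trivial fiber bundles with fiber $K'/K$. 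The orbit-equivalence hypothesis ensures that the Krull dimensions of $H^*_K(M)$ over $H^*(BK)$ and of $H^*_{K'}(M)$ over $H^*(BK')$ both equal the dimension of a principal orbit of the action, so only the equality of depths will remain to be addressed.

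Since $H^*(K'/K)$ is finite-dimensional, the Serre spectral sequence of $\pi$ shows that $H^*_K(M)$ is a finitely generated module over $H^*_{K'}(M)$ via the restriction ring homomorphism. Applying Lemma \ref{serre} to this ring map, together with Corollary \ref{cmcrit}, the $K$-action is Cohen-Macaulay if and only if $H^*_K(M)$ is Cohen-Macaulay as a module over $H^*_{K'}(M)$. It thus remains to show that this property is in turn equivalent to the Cohen-Macaulayness of $H^*_{K'}(M)$ over itself, i.e., of the $K'$-action.

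For this final step I would pass to the maximal tori $T \subseteq T'$ of $K \subseteq K'$ via Proposition \ref{maxtor}(b). Since $T$ is normal in $T'$, the map $M_T \to M_{T'}$ is a principal $T'/T$-bundle, and the associated Koszul/Eilenberg-Moore argument expresses $H^*_T(M)$ in terms of $H^*_{T'}(M)$ and the images in $H^*_{T'}(M)$ of generators of the kernel of $H^*(BT')\to H^*(BT)$ (the Chern classes of this principal bundle). The orbit-equivalence hypothesis, by fixing the common dimension of a principal orbit, should force these Chern classes to form a regular sequence on $H^*_{T'}(M)$ precisely when that module is Cohen-Macaulay, and then quotienting by them yields $H^*_T(M)$ with the same Cohen-Macaulay status.

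The main obstacle I anticipate is exactly this last point: the orbit-equivalence hypothesis for $K \subseteq K'$ does not automatically descend to the maximal tori $T \subseteq T'$, so a careful separate analysis of how principal orbit dimensions and isotropy ranks match between the two actions at the torus level will likely be required to establish the regularity of the Chern classes and close the argument.
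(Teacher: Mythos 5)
Your first reduction is sound and genuinely different in flavor from the paper's: identifying $M_K\to M_{K'}$ as a $K'/K$-bundle pulled back from $BK\to BK'$, deducing from the Serre spectral sequence that $H^*_K(M)$ is a finitely generated module over $H^*_{K'}(M)$, and invoking Lemma \ref{serre} together with Corollary \ref{cmcrit} correctly reduces the lemma to the assertion that $H^*_K(M)$ is Cohen-Macaulay over $H^*_{K'}(M)$ if and only if $H^*_{K'}(M)$ is Cohen-Macaulay. (One side remark: the Krull dimension of $H^*_G(M)$ over $H^*(BG)$ is the maximal rank of an isotropy group, not the dimension of a principal orbit, and the two dimensions for $K$ and $K'$ differ by ${\rm rank}\,K'-{\rm rank}\,K$; this does not affect the reduction.) Note, however, that up to this point the hypothesis $Kp=K'p$ has not been used at all, and the remaining assertion is false for a general finitely generated module over a general ring; so the entire content of the lemma is concentrated in the step you leave open.

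The gap in that final step is moreover larger than the one you flag (namely that $Tp=T'p$ may fail for the maximal tori). Your Koszul/Eilenberg-Moore mechanism only runs in one direction: if $H^*_{T'}(M)$ is Cohen-Macaulay, one can hope to show that the $d={\rm rank}\,T'-{\rm rank}\,T$ Chern classes are part of a system of parameters, hence a regular sequence, so that the higher Koszul homology vanishes and $H^*_T(M)$ is the Cohen-Macaulay quotient. But if $H^*_{T'}(M)$ is \emph{not} Cohen-Macaulay, its depth may well be smaller than $d$, the Chern classes then cannot form a regular sequence, the Eilenberg-Moore $E_2$-page acquires nonzero higher ${\rm Tor}$ terms, and nothing prevents the abutment $H^*_T(M)$ from being Cohen-Macaulay anyway; thus the implication from the $K$-action to the $K'$-action is not addressed. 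The paper avoids this entirely by exploiting the hypothesis at the level of strata: from $Kp=K'p$ and the homotopy invariance of the rank of a homogeneous space it deduces ${\rm rank}\,K-{\rm rank}\,K_p={\rm rank}\,K'-{\rm rank}\,K'_p$, hence the equality of the maximal-isotropy-rank strata $M_{b,K}=M_{b',K'}$; after dividing by a common subtorus $S$ acting locally freely, both Cohen-Macaulay conditions become the single numerical identity $\dim H^*(M/S)=\dim H^*(M_{b,K}/S)$. If you wish to keep your framework, you would still need an input of this kind to make the regularity of the Chern classes equivalent to the Cohen-Macaulay property in both directions.
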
 

\begin{proof}
Let $b$ denote the maximal rank of an isotropy subgroup of the $K$-action
and $M_{b,K}$ the subspace of $M$ consisting of all points whose isotropy group
has rank equal to $b$. In the same way, to the action of $K'$ one assigns
the number $b'$ and the subspace $M_{b', K'}$. 
For any $p\in M$ we have $Kp=K'p$. In general, if $G$ is a compact  Lie group 
and $U \subset G$  a closed subgroup,
then  the  rank
$\chi\pi(G/U)$ of the homogeneous space $G/U$ is a homotopy invariant, which 
 turns out to be equal to the difference  ${\rm rank } \ \! U - {\rm rank } \ \! G$
(see \cite[Def.~Sect.~1.1 and Sect. 4.2]{All2}).  In our situation, 
for any $p\in M$ we have $Kp = K'p$, and hence the homogeneous spaces 
$K/K_p$ and $K'/K'_p$ are diffeomorphic, where $K_p$ and $K'_p$ are the 
isotropy subgroups at $p$. 
We deduce \begin{equation}\label{rankk}{\rm rank } \ \! K - {\rm rank } \ \! K_p={\rm rank } \ \! K' -
{\rm rank } \ \! K'_p.\end{equation}
(We note that the same argument was used in \cite[Prop.~7]{Go-Po}; we could also just apply the latter proposition to the $K$- and $K'$-actions 
on $Kp=K'p$ to obtain the same conclusion.) Eq.~(\ref{rankk}) implies:
\begin{equation}\label{mbh} M_{b, K} = M_{b', K'}.\end{equation} 
Pick  maximal tori
$T$ in $K$ and $T'$ in $K'$ such that $T\subset T'$. 
The minimal dimension of a $T$-orbit in $M$ is 
${\rm rank} \ \! K - b$, cf.~e.g.~\cite[Lemma 4.1]{Go-Ro}.
Similarly, the minimal dimension of a $T'$-orbit in $M$ is 
${\rm rank} \ \! K' - b'$. But
the two aforementioned numbers are the $K$-, respectively $K'$-coranks of 
any point in $M_{b,K}$ respectively $M_{b',K'}$, hence,
by eq.~(\ref{mbh}), they are equal. 
Thus, there exists a subtorus $S\subset T$ of rank
equal to the two numbers which acts locally freely on
$M$.

The sets $M_{b, T}$ and $M_{b', T'}$ defined in the same way
as before are non-empty, clearly contained in $M_{b,K}$ and
$M_{b', K'}$ respectively. The $K$-action on $M_{b, K}$
is Cohen-Macaulay, see \cite[Cor.~4.3]{Go-Ro}.   
Consequently, this time by Prop.~\ref{maxtor} (b) and Cor.~\ref{cmcrit}, $H^*_T(M_{b,K})$ is a
 Cohen-Macaulay ring. Since $S\subset T$ acts locally freely,
 the latter ring is isomorphic to $H^*_{T/S}(M_{b,K}/S)$. 
 
 On the other hand, the $T/S$-action on $M_{b,K}/S$ admits  points that are fixed.
 Namely, they are orbits of the form $Sp$ such that $Sp=Tp$,
 which implies that \begin{equation}\label{fix}{\rm corank}_T \ \! T_p = {\rm rank} \ \! S.\end{equation}
But if $p\in M$ satisfies the latter condition,
then $Sp$ is a connected and closed submanifold of $Tp$, of the same dimension as the latter,
hence $Sp=Tp$. Thus condition (\ref{fix}) characterizes the fixed points.
Since ${\rm rank} \ \! S = {\rm rank} \ \! T -b$, that condition is actually equivalent
to $p\in M_{b, T}$. We have actually proved:
\begin{equation}\label{movers}(M/S)^{T/S}=(M_{b,K}/S)^{T/S}=M_{b,T}/S.\end{equation}
  
From the previous considerations, the $T/S$-action on $M_{b,K}/S$ is equivariantly formal.
Consequently, by eq.~(\ref{movers}), 
\begin{equation}\label{dimh}
 \dim H^*(M_{b,K}/S) =\dim H^*(M_{b,T}/S).\end{equation}
In the same way, one analyzes the $K'$-action on $M$ and obtains
\begin{equation}\label{movers2}(M/S)^{T'/S}=(M_{b',K'}/S)^{T'/S}=M_{b',T'}/S.\end{equation}
as well as
\begin{equation}\label{dimh2} \dim H^*(M_{b',K'}/S) =\dim H^*(M_{b',T'}/S).\end{equation}

We are now in a position to prove the equivalence stated in the lemma.
First, the $K$-action on $M$ is Cohen-Macaulay if and only if so is the induced $T$-action,
see Prop.~\ref{maxtor} (b). But $H^*_T(M)=H^*_{T/S}(M/S)$ and the $T/S$-action on $M/S$ admits fixed points,
thus the latter Cohen-Macaulay condition is equivalent to: the $T/S$-action on $M/S$ is equivariantly formal.
Equivalently, by eqs.~(\ref{movers}) and (\ref{dimh}),
$$\dim H^*(M/S)= \dim H^*(M_{b,K}/S).$$
In exactly the same way, this time by using eqs.~(\ref{movers2}) and (\ref{dimh2}),
the $K'$-action on $M$ is Cohen-Macaulay if and only if 
$$\dim H^*(M/S)= \dim H^*(M_{b',K'}/S).$$ 
The proof is completed by taking into account eq.~(\ref{mbh}).
\end{proof}

\begin{lemma}\label{kernel} Let $M$ be a closed manifold and $K$ a compact and connected Lie group
that acts smoothly on $M$. Let also $H\subset K$ be the  kernel of the action. 
Then the $K$-action on $M$ is Cohen-Macaulay if and only if so is the $K/H$-action.
\end{lemma}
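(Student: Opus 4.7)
The plan is to reduce by Proposition~\ref{maxtor}(b) to the case where the acting group is a torus, peel off the identity component of the kernel using a splitting of the torus and Proposition~\ref{commalg}, and finally deal with the remaining finite group via a rational-acyclicity argument.

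First, choose a maximal torus $T \subseteq K$; its image under $K \to K/H$ is the maximal torus $\bar T := T/(T \cap H)$ of $K/H$. By Proposition~\ref{maxtor}(b), applied on both sides, it suffices to show that the $T$-action on $M$ (which has kernel $T \cap H$) is Cohen-Macaulay if and only if the $\bar T$-action on $M$ is.

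Next, set $S := (T \cap H)_0$, a subtorus of $T$, and choose a complementary subtorus $T_1$ with $T = S \times T_1$ as Lie groups. Since $S$ acts trivially on $M$, the Borel construction splits up to homotopy as $ET \times_T M \simeq BS \times (ET_1 \times_{T_1} M)$, giving an isomorphism $H^*_T(M) \cong H^*(BS) \otimes H^*_{T_1}(M)$ of graded modules over $H^*(BT) \cong H^*(BS) \otimes H^*(BT_1)$. Invoking Proposition~\ref{commalg}(c) with $R = H^*(BT_1)$, ambient ring $H^*(BT)$, $A = H^*_{T_1}(M)$, and $B = H^*(BT)$ (which is free, hence flat, over $R$), one finds $A \otimes_R B \cong H^*_T(M)$ and $B/\mathfrak{m}B \cong H^*(BS)$; the latter is Cohen-Macaulay over $H^*(BT)$, being a quotient of a polynomial ring by a regular sequence. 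This reduces the claim further to the statement that the $T/S$-action is Cohen-Macaulay if and only if the $\bar T$-action is.

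To finish, $T/S \twoheadrightarrow \bar T$ has finite kernel $F := (T \cap H)/S$, so $B(T/S) \to B\bar T$ is a fibration with $\mathbb{Q}$-acyclic fibre $BF$, and the Serre spectral sequence yields a graded ring isomorphism $H^*(B\bar T) \cong H^*(B(T/S))$. Because the $T/S$-action on $M$ factors through $\bar T$, the Borel bundle $E(T/S) \times_{T/S} M \to B(T/S)$ is the pullback of $E\bar T \times_{\bar T} M \to B\bar T$ along that map, hence also has fibre $BF$, and delivers an analogous ring isomorphism $H^*_{\bar T}(M) \cong H^*_{T/S}(M)$ compatible with the previous one. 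The two module structures therefore agree, and chaining the equivalences completes the argument. The most delicate step I expect is this last one: the pullback description of the Borel fibration is what makes the Serre spectral sequence deliver compatible isomorphisms of coefficient rings \emph{and} equivariant cohomologies, rather than ring isomorphisms in isolation; the commutative-algebra input is by contrast a direct application of Proposition~\ref{commalg}, once the splitting $T = S \times T_1$ is in place.
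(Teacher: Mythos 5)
Your argument is correct, but it follows a genuinely different route from the paper's. The paper proves Lemma~\ref{kernel} by recycling the machinery set up in the proof of Lemma~\ref{prop:orbit}: having fixed the locally free subtorus $S\subset T$ of rank ${\rm rank}\ \! T-b$, it characterizes Cohen-Macaulayness of the $T$-action by the single numerical condition $\dim H^*(M/S)=\dim H^*(M_{b,T}/S)$, and then simply observes that the corresponding data for a maximal torus $T_1$ of $K/H$ and the image $S_1$ of $S$ are literally unchanged ($M/S_1=M/S$ and $M_{b_1,T_1}=M_{b,T}$, with $b_1=b-{\rm rank}\ \! H$), so the two criteria coincide. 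You instead work directly with the equivariant cohomology modules: after the same reduction to maximal tori via Prop.~\ref{maxtor}(b), you split $T=S\times T_1$ with $S=(T\cap H)_0$ acting trivially, use the K\"unneth decomposition $H^*_T(M)\cong H^*(BS)\otimes H^*_{T_1}(M)$ together with Prop.~\ref{commalg}(c) to discard the $H^*(BS)$ factor, and then dispose of the residual finite kernel $F$ by the rational acyclicity of $BF$ and the pullback description of the Borel fibration. Both arguments are sound; yours is more self-contained (it does not depend on the orbit-space criterion via locally free subtori, and it identifies the two equivariant cohomologies as modules over canonically isomorphic rings, which is a slightly stronger statement), while the paper's is shorter in context because the relevant criterion has already been established one lemma earlier. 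The minor points you should make explicit if you write this up are that a subtorus of a torus always admits a complementary direct factor (so that $T=S\times T_1$ is legitimate), and that $H^*(BS)=B/\mathfrak{m}B$ is Cohen-Macaulay as a module over $H^*(BT)$ via Lemma~\ref{serre}; both are routine.
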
 

\begin{proof} Let $b$ and $M_{b,K}$ be as in the proof of Lemma \ref{prop:orbit}.
Pick maximal tori $T$ and  $T'$ in $K$ and $H$ respectively such that $T'\subset T$.
Again, let $S\subset T$ be a subtorus of rank equal to 
${\rm rank} \ \! T -b$ whose action on $M$ is locally free.
We already noticed that the $K$-action on $M$ is Cohen-Macaulay
if and only if 
$$\dim H^*(M/S)= \dim H^*(M_{b,T}/S).$$
Observe that there exists a maximal torus $T_1 \subset K/H$ along with a covering map
$T/T' \to T_1$. 
Furthermore, for the $K/H$-action on $M$, the maximal rank of an isotropy subgroup 
is $b_1:=b - {\rm rank} \ \! H.$
Since the intersection $T'\cap S$ is finite,  the 
image $S_1$ of $S$ under the projection $T\to T/T'\to T_1$ is a subtorus of rank equal to
${\rm rank} \ \! S={\rm rank} \ \! T_1 -b_1$ which acts locally freely on $M$. As before, the $K/H$-action on $M$ is Cohen-Macaulay if and only if
$$\dim H^*(M/S_1)= \dim H^*(M_{b_1,T_1}/S_1).$$
But $M/S_1=M/S$ and $M_{b_1,T_1}=M_{b, T}$,
thus the equivalence stated  in the lemma is clear.
\end{proof}

Let us now recall that two isometric actions of two connected compact Lie groups on a  Riemannian manifold are
{\it orbit equivalent} if there is an isometry of the manifold to itself which maps
each  orbit of the first group action to an orbit of the
second one.    

\begin{theorem}\label{orbit} Let $M$ be a compact Riemannian manifold 
and $K$  a connected and closed subgroup of 
the isometry group of $M$. Assume that the $K$-action on $M$   
 is orbit equivalent to a Cohen-Macaulay action on $M$.
Then the $K$-action  is Cohen-Macaulay as well. 
\end{theorem}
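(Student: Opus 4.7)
The plan is to reduce to the setup of Lemma~\ref{prop:orbit}. Let $K' \times M \to M$ denote the given Cohen-Macaulay action and $\phi \in \mathrm{Isom}(M)$ the isometry realizing the orbit equivalence, so that $\phi(Kp) = K'\phi(p)$ for every $p \in M$. The conjugated action $(k', p) \mapsto \phi^{-1}(k' \cdot \phi(p))$ is another isometric $K'$-action on $M$, equivariantly diffeomorphic to the original via $\phi$ (hence Cohen-Macaulay), and its orbit through $p$ equals $\phi^{-1}(K'\phi(p)) = Kp$. The associated homomorphism $K' \to \mathrm{Isom}(M)$ may have a non-trivial kernel, but quotienting by it preserves the Cohen-Macaulay property by Lemma~\ref{kernel}. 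After these preliminary reductions, I may assume that both $K$ and $K'$ are connected closed subgroups of $\mathrm{Isom}(M)$ satisfying $Kp = K'p$ for every $p \in M$, with the $K'$-action still Cohen-Macaulay.

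Since $M$ is compact, $\mathrm{Isom}(M)$ is itself a compact Lie group by Myers--Steenrod. Inside it, form the closure $\tilde K := \overline{\langle K \cup K' \rangle}$. This is a closed, hence Lie, subgroup of $\mathrm{Isom}(M)$; it is connected because $\langle K \cup K' \rangle$ is generated by the connected set $K \cup K'$ containing the identity, and the closure of a connected set is connected. A short induction on word length shows that every element of $\langle K \cup K' \rangle$ sends $p$ into $Kp$: indeed, $Kp = K'p$ is stable under both $K$ and $K'$. Hence $\langle K \cup K' \rangle \cdot p \subseteq Kp$, and the reverse inclusion is obvious, giving $\langle K \cup K' \rangle \cdot p = Kp$. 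Because $Kp$ is closed in $M$ (as $K$ is compact), passing to the closure yields $\tilde K \cdot p = Kp$ as well. Thus $\tilde K$ has precisely the same orbits as $K$ and as $K'$.

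Now Lemma~\ref{prop:orbit} applies to the two inclusions $K \subseteq \tilde K$ and $K' \subseteq \tilde K$, producing the chain of equivalences: the $K$-action on $M$ is Cohen-Macaulay if and only if the $\tilde K$-action is, if and only if the $K'$-action is. Since the $K'$-action is Cohen-Macaulay by hypothesis, so is the $K$-action. The main technical point, and the one requiring some care, is paragraph two: one must simultaneously verify the Lie-group properties (closedness and connectedness) of $\tilde K$ and check that taking the closure of the generated subgroup does not enlarge the orbits beyond $Kp$. Once $\tilde K$ is in place, the rest is a formal consequence of the two preparatory lemmas.
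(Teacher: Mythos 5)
Your proposal is correct and follows essentially the same route as the paper: conjugate the Cohen--Macaulay action by the orbit equivalence, reduce to subgroups of the isometry group via Lemma~\ref{kernel}, pass to the closed connected subgroup generated by $K$ and the conjugated $K'$ (which has the same orbits), and apply Lemma~\ref{prop:orbit} twice. Your write-up is somewhat more explicit about the connectedness and closedness of the enveloping subgroup and about why its orbits do not grow, but the argument is the same.
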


\begin{proof} 
         Let $G$ be the isometry group of $M$. By hypothesis, there exists a connected and closed subgroup $K'\subset G$ whose canonical action on $M$ is Cohen-Macaulay such that the actions of $K$ and $K'$ on $M$ are orbit equivalent
         (notice that a priori $K'$ might not be contained in $G$; in this case, we mod out the kernel of its action on $M$ and use Lemma 
         \ref{kernel} above).  
     Thus there exists an isometry $f : M \to M$ which maps any $K$-orbit to a  $K'$-orbit. 
      Consider the closed subgroup
     $K''$ of $G$ generated by $f^{-1} K' f$ and $K$. Note that $K''$ is connected.
     The key-observation  is that for any $p\in M$, we have
     \begin{equation}\label{hp} Kp=(f^{-1}K'f)p=K''p.\end{equation}
     The first equality is clear and  immediately implies the second.  
     
     Due to eq.~(\ref{hp}), the result stated in the theorem is  a direct consequence of Lemma \ref{prop:orbit},
     used twice.\end{proof}

We are now in a position to prove the main result of the paper:

\noindent{\it Proof of Thm.~\ref{thm:realmain}.}  
As already mentioned in the introduction, Kollross has shown in  \cite{Ko} that any hyperpolar action is orbit equivalent
  to a direct product of actions of one of the following types:
  transitive, of cohomogeneity one, or Hermann. We know that an action of each of these three types
  is Cohen-Macaulay: for the first two, see \cite[Cor.~1.2]{Go-Ma}, for the last,
 use Thm.~\ref{main}. We apply Thm.~\ref{orbit}: even though the acting group is
  not necessarily a subgroup of ${\rm Iso}(M)$, we can mod out the kernel of the action and use
  Lemma \ref{kernel}.\hfill $\square$ 
  
  \begin{remark} {\rm It would be interesting to find a classification-free proof of Thm.~\ref{thm:realmain}, using the very definition of a hyperpolar action.}   
  \end{remark}
  
  \appendix
  \section{The non-abelian Atiyah-Bredon exact sequence for Cohen-Macaulay actions} 
Although not directly related to the main topic of this paper, the following result is devoted to illustrate the importance of showing that a group action is Cohen-Macaulay. In the special case when the acting group is abelian,
the result was proved by the first-named author of this paper and D.~T\"oben in \cite{Go-To}.
For an arbitrary (compact and connected) acting group, versions of it were obtained
by M.~Franz in \cite{Franz}. We refer to the two papers above for a discussion concerning
the history of the topic, which goes back to M.~Atiyah \cite{Atiyah} and G.~E.~Bredon  \cite{Bredon}.

Let $M$ be a closed manifold equipped with a smooth action of a compact and connected Lie group
$G$ and let $b$ be the maximal rank of a $G$-isotropy subgroup. (Note that $b$ is in general smaller than the rank of $G$.)  
The corresponding orbit filtration is defined by
$$M_i:=\{ p \in M \mid {\rm rank} \ \! G_p \ge i\},$$
where $i$ is arbitrary between $0$ and $b+1$.  
For any $1\le i \le b$, consider the map
$$H_G^*(M_{i}, M_{i+1}) \to H_G^{*+1}(M_{i-1}, M_{i}),$$
which is the connecting homomorphism in the long exact sequence in $G$-equivariant cohomology of the triple $(M_{i-1}, M_i, M_{i+1})$.
By concatenating these maps one obtains the following long sequence:
\begin{equation}\label{ab}0\to H^*_G(M)\to H^*_G(M_b)\to H^{*+1}_G(M_{b-1}, M_b)\to \ldots \to H_G^{*+b-1}(M_{1}, M_{2}) \to H^{*+b}_G(M,M_1)\to 0,
\end{equation}
which we call the Atiyah-Bredon sequence.

\begin{theorem} The $G$-action on $M$ is Cohen-Macaulay if and only if the Atiyah-Bredon sequence  is exact.\end{theorem}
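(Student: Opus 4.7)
My plan is to reduce the theorem to the abelian case, established by the first-named author and Töben in \cite{Go-To}, via passage to a maximal torus $T\subset G$.

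\textbf{First}, by Proposition \ref{maxtor}(b), the $G$-action on $M$ is Cohen-Macaulay if and only if the induced $T$-action is Cohen-Macaulay. By the main result of \cite{Go-To}, the $T$-action is Cohen-Macaulay if and only if the Atiyah-Bredon sequence attached to the $T$-orbit filtration $M_i^T:=\{p\mid {\rm rank}\,T_p\ge i\}$ is exact. It therefore suffices to show that exactness of the $T$-Atiyah-Bredon sequence is equivalent to exactness of the $G$-Atiyah-Bredon sequence in the statement (which is built from the coarser filtration $M_i=\{p\mid {\rm rank}\,G_p\ge i\}$).

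\textbf{Second}, the key tool is the classical identification $H^*_G(X;\bQ)\cong H^*_T(X;\bQ)^W$ for every $G$-space $X$, where $W=N_G(T)/T$ is the Weyl group. Applied to each $G$-invariant pair $(M_i,M_{i+1})$ and combined with the naturality of connecting homomorphisms, this realizes the $G$-Atiyah-Bredon sequence as the $W$-invariant subcomplex of a $T$-equivariant Atiyah-Bredon-type complex built from $\{M_i\}$. Since $|W|$ is invertible in $\bQ$, the functor of $W$-invariants is exact on $\bQ[W]$-modules, which gives one implication immediately. For the other implication, I would exploit the fact that $H^*_T(X)$ is free as an $H^*_G(X)$-module (via the $G/T$-fibration $EG\times_T X\to EG\times_G X$, whose Serre spectral sequence degenerates because $H^*(G/T;\bQ)$ is concentrated in even degrees); this freeness then lifts exactness of the $W$-invariant subcomplex back to exactness of the full $T$-equivariant complex.

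\textbf{The main obstacle} is reconciling the two orbit filtrations: in general $M_i^T\subsetneq M_i$, while $M_i=G\cdot M_i^T$. One must verify that replacing the coarser $M_i$ by the finer $M_i^T$ in the $T$-equivariant Atiyah-Bredon-type sequence does not affect its cohomology after passing to $W$-invariants. I would handle this by a slice argument around $T$-orbit types, exploiting that every maximal torus of any isotropy group $G_p$ is $G$-conjugate into $T$; the expected conclusion is a $W$-equivariant isomorphism $H^*_T(M_i,M_{i+1})^W\cong H^*_T(M_i^T,M_{i+1}^T)^W$, induced by the $T$-equivariant inclusion and compatible with connecting homomorphisms. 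Combined with the previous step, this reduces the problem to the standard $T$-Atiyah-Bredon sequence treated in \cite{Go-To} and completes the proof.
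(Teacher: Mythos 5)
Your reduction to the abelian case is a reasonable strategy---it is essentially the philosophy behind Franz's work \cite{Franz}---but as written it has two genuine gaps, both concentrated in the passage between the $G$-filtration $\{M_i\}$ and the $T$-filtration $\{M_i^T\}$. First, the logic of the $W$-invariants step runs in the wrong direction at the crucial point: exactness of a complex of $\bQ[W]$-modules does imply exactness of its $W$-invariant subcomplex, but the converse fails in general (a complex whose cohomology consists of nontrivial irreducible representations has exact invariants). Your proposed repair via freeness, i.e.\ the base-change isomorphism $H^*_T(X,A)\cong H^*_G(X,A)\otimes_{H^*(BG)}H^*(BT)$, does let you pass between exactness of the $G$-Atiyah-Bredon sequence and exactness of the $T$-equivariant complex built on the \emph{same} filtration $\{M_i\}$; but the sequence you need from \cite{Go-To} is the one built on $\{M_i^T\}$, and your only bridge to it is an isomorphism after taking $W$-invariants, which discards exactly the non-invariant part whose exactness you must also control. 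Second, the asserted isomorphism $H^*_T(M_i,M_{i+1})^W\cong H^*_T(M_i^T,M_{i+1}^T)^W$, compatible with connecting homomorphisms, is not something a routine slice argument delivers: the inclusion $(M_i^T,M_{i+1}^T)\hookrightarrow(M_i,M_{i+1})$ is not a $W$-equivariant cohomology equivalence in any naive sense, and establishing the correct comparison between the non-abelian and abelian orbit filtrations is the main technical content of \cite{Franz}. Completed honestly, your outline would end up reproving a substantial part of that paper.

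The paper takes a different and shorter route: it runs the homological-algebra argument of \cite[Thm.~6.1]{Go-To} (cf.~\cite[Sect.~4]{FranzPuppe2003}) directly on the non-abelian filtration. The three inputs are: the equivalence of exactness of the long sequence with the splitting of the long exact sequences of the triples $(M_{i-1},M_i,M_{i+1})$ into short exact sequences, as in \cite[Lemma 4.1]{Fr-Pu}; the Krull dimension bound $\dim H^*_G(M,M_i)\le i-1$ from \cite[Lemma 2.1]{Go-Ma0}; and the fact that $H^*_G(M_{i-1},M_i)$ is Cohen-Macaulay of dimension $i-1$ from \cite[Cor.~1]{Franz}. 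In other words, the non-abelian difficulties you would have to confront are outsourced to cited results rather than rederived through Weyl invariants. If you wish to keep the torus-reduction strategy, you should invoke Franz's comparison theorem explicitly in place of the slice argument, and you must still repair the direction-of-implication issue above.
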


\begin{proof} The result  follows using the same argument as in the proof of \cite[Thm.~6.1]{Go-To},
cf.~also \cite[Sect.~4]{FranzPuppe2003}. The main ingredients in the proof are:

\begin{itemize}
\item The exactness of the sequence (\ref{ab}) is equivalent to the exactness of
 $$0\to H^*_G(M, M_i) \to H^*_G(M_{i-1}, M_i) \to H^{*+1}_G(M, M_{i-1}) \to 0,$$
for any $1 \le i \le b$; in other words, the long exact sequence of the triple $(M_{i-1}, M_i, M_{i+1})$ splits into short exact sequences.
This can be shown going along the same lines as in the proof of \cite[Lemma 4.1]{Fr-Pu}.
\item For any $1\le i \le b$, the Krull dimension of $H^*_G(M, M_i)$ is at most $i-1$. This is the content of \cite[Lemma 4.4]{FranzPuppe2003}
in the case when the acting group is abelian; the arguments of the proof carry over to the general situation, see
\cite[Lemma 2.1]{Go-Ma0} (this result was proved for cohomology with coefficients in
$\bR$, thus it consequently holds for coefficients in $\bQ$, see Remark \ref{pair} above).      
\item For any $1\le i \le b$, $H^*_G(M_{i-1}, M_i)$ is Cohen-Macaulay of dimension $i-1$.
For this, we refer to \cite[Cor.~1]{Franz}.
\end{itemize}

\end{proof}


\begin{thebibliography}{Fo-Ge-Po}


\bibitem{Ad1} J.~F.~Adams, {\it Spin(8), triality, F4, and all that}, Superspace and Supergravity, 435-445,
ed.~S.~Hawking and M.~Rocek, Cambridge Univ.~Press, 1981

\bibitem{All2} C.~Allday, {\it On the rank of a space}, Trans.~Amer.~Math.~Soc.~{\bf 166} (1972), 173-185

\bibitem{AlldayPuppe} C.~Allday and V.~Puppe, {\it Cohomological Methods in Transformation Groups}, Cambridge
Studies in Advanced Mathematics, Vol.~32, Cambridge University Press,
Cambridge, 1993

\bibitem{Atiyah} M.~Atiyah, {\it Elliptic Operators and Compact Groups}, Lecture Notes in Mathematics, Vol.~401,
Springer-Verlag, Berlin, 1974 


\bibitem{At-Bo} M.~F.~Atiyah and R.~Bott, {\it The moment map and equivariant cohomology}, Topology {\bf 23} (1984), 1-28

\bibitem{Ba} T.~J.~Baird, {\it Classifying spaces of twisted loop groups}, Algebr.~Geom.~Topol.~{\bf 16} (2016), no.~1, 211-229



\bibitem{Bo} A.~Borel, {\it Seminar in transformation groups}, Annals of Math.~Studies, Princeton University Press, 1960

\bibitem{Bredon} G.~E.~Bredon, {\it The free part of a torus action and related numerical equalities}, Duke Math.~J.~{\bf 41} (1974), 843-854 


\bibitem{BrunsHerzog} W.~Bruns and J.~Herzog, {\it Cohen-Macaulay Rings},
Cambridge Studies in Advanced Mathematics, Vol. 39,
Cambridge University Press, Cambridge 1993


\bibitem{Ca3} J.~D.~Carlson, {\it The Borel equivariant cohomology of real Grassmannians}, 
preprint 	arXiv:1611.01175 


\bibitem{Ca2} J.~D.~Carlson, {\it Equivariant formality of isotropic torus actions, I}, 
preprint arXiv:1410.5740 


\bibitem{Ca1} J.~D.~Carlson, {\it On the Equivariant Cohomology of Homogeneous Spaces}, 
 http://www.math.toronto.edu/jcarlson/homogeneous\_space\_book.pdf (version of Jan.~10, 2019; accessed on March 7, 2019)




\bibitem{Ca-Fo} J.~D.~Carlson and C.-K.~Fok, {\it  Equivariant formality of isotropy actions},  
J.~Lond.~Math.~Soc.~{\bf 97} (2018), no.~3, 470-494


\bibitem{Co} P.~E.~Conner, {\it On the action of the circle group},  Michigan Math.~J.~{\bf 4} (1957), no.~3, 241-247


\bibitem{Duflot} J.~Duflot, {\it Depth and equivariant cohomology},  Comment.~Math.~Helv.~{\bf 56} (1981), 627-637

\bibitem{Franz} M.~Franz, {\it Syzygies in equivariant cohomology for non-abelian Lie groups}, pp.~325-360 in: Filippo Callegaro et al.~(eds.), Configuration spaces (Cortona, 2014), Springer INdAM Ser.~14, Springer, Cham 2016

\bibitem{Fr-Pu} M.~Franz and V.~Puppe, {\it Exact cohomology sequences with integral coefficients for torus actions}, 
Transform.~Groups {\bf 12} (2007), 65-76

\bibitem{FranzPuppe2003} M.~Franz and V.~Puppe, {\it Exact sequences for equivariantly formal spaces}, C.~R.~Math. Acad.~Sci.~Soc.~R.~Can.~{\bf 33} (2011), 1-10


\bibitem{Go} O.~Goertsches, {\it The equivariant cohomology of isotropy actions on symmetric spaces}, Documenta Math.~{\bf 17} (2012), 79-94

\bibitem{Go-No} O.~Goertsches and S.~Hagh Shenas Noshari, {\it Equivariant formality of isotropy actions on homogeneous spaces defined by Lie group automorphisms},
 J.~Pure Appl.~Algebra {\bf 220} (2016), 2017-2028

\bibitem{Go-Ma0} O.~Goertsches and A.-L.~Mare, {\it Non-abelian GKM theory}, Math.~Z.~{\bf 277} (2014), 1-27


\bibitem{Go-Ma} O.~Goertsches and A.-L.~Mare, {\it Equivariant cohomology of cohomogeneity one actions},
Top.~Appl.~{\bf 167} (2014), 36-52


\bibitem{Go-Ro} O.~Goertsches and S.~Rollenske, {\it Torsion in equivariant cohomology and Cohen-Macaulay actions},
Transform.~Groups {\bf 16} (2011), 1063-1080


\bibitem{Go-To} O.~Goertsches and D.~T\"oben, {\it Torus actions whose equivariant cohomology 
is Cohen-Macaulay}, J.~of Topology {\bf 3} (2010), 1-28

\bibitem{Go-Zo} O.~Goertsches and L.~Zoller, {\it Equivariant de Rham cohomology: theory and applications}, 
preprint arXiv:1812.09511

\bibitem{Go-Po} C.~Gorodski and F.~Podesta, {\it Homogeneity rank of real representations of compact Lie groups},
J.~Lie Theory {\bf 15} (2005), 63-77

\bibitem{GGK} V.~W.~Guillemin, V.~L.~Ginzburg, and Y.~Karshon, {\it Moment maps, cobordisms, and Hamiltonian
group actions}, Mathematical Surveys and Monographs, Vol.~96,  Amer.~Math.~Soc., Providence, RI,
2002

\bibitem{GS} V.~Guillemin and S.~Sternberg, {\it Supersymmetry and Equivariant de Rham Theory}, Springer-Verlag, Berlin 1999



\bibitem{No} S.~Hagh Shenas Noshari, {\it On the equivariant cohomology of isotropy actions}, Doctoral Dissertation, University of Marburg, 2018

\bibitem{He} S.~Helgason, {\it Differential Geometry, Lie Groups, and Symmetric Spaces}, Graduate Studies in Mathematics, Vol. 34, American Mathematical Society, Providence, Rhode Island, 2001

\bibitem{HPTT1} E.~Heintze, R.~Palais, G.~Thorbergsson, and C.-L.~Terng, {\it Hyperpolar actions
and k-flat homogeneous spaces}, J.~Reine Angew.~Math.~{\bf 454} (1994),
163-179

\bibitem{HPTT2} E.~Heintze, R.~Palais, G.~Thorbergsson, and C.-L.~Terng, {\it Hyperpolar actions
on symmetric spaces}, Geometry, Topology and Physics, International
Press, Cambridge, MA, 1995, 214-245

\bibitem{Her} R.~Hermann, {\it Variational completeness for compact symmetric spaces}, Proc.~Amer.~Math.~Soc.~{\bf 11} (1960), 544-546

\bibitem{Hs} W.-Y.~Hsiang, {\it Cohomology Theory of Topological Transformation Groups}, Ergebnisse der Mathematik und ihrer Grenzgebiete, Vol.~85, Springer-Verlag, New-York 1975


\bibitem{Ki} F.~C.~Kirwan,  {\it  Cohomology of Quotients in Symplectic and Algebraic Geometry}, Mathematical Notes {\bf 31}, Princeton University Press, Princeton, 1984

\bibitem{Kobayashi} S.~Kobayashi, {\it Fixed points of isometries}, Nagoya Math.~J.~{\bf 13} (1958), 63-68

\bibitem{Ko1} A.~Kollross, {\it A classification of hyperpolar and cohomogeneity one actions},
Trans.~Amer.~Math.~Soc.~{\bf 354} (2002), 571-612



\bibitem{Ko3} A.~Kollross, {\it Polar actions on symmetric spaces},  J.~Differential Geom.~{\bf 77}
(2007), 425-482

\bibitem{Ko2} A.~Kollross, {\it Low cohomogeneity and polar actions on exceptional compact
Lie groups}, Transform.~Groups {\bf 14} (2009), 387–415


\bibitem{Ko} A.~Kollross, {\it Hyperpolar actions on reducible symmetric spaces}, Transform.~Groups {\bf 22} (2017),  207-228




\bibitem{Lo} O.~Loos, {\it Symmetric Spaces II:  Compact Spaces and Classification}, Mathematics Lecture Notes Series, W.~A.~Benjamin, Inc., New York, 1969 



\bibitem{Po} M.~Poulsen, {\it Depth, detection and associated primes in
the cohomology of finite groups (An introduction to Carlson’s depth conjecture)}, Master Thesis, University of Copenhagen,
2007, URL: http://web.math.ku.dk/$\sim$moller/students/mortenP.pdf

\bibitem{Quillen} D.~Quillen, {\it The spectrum of an equivariant cohomology ring: I}, Ann.~Math.~{\bf 94} (1971), 549-572

\bibitem{Se} J.-P.~Serre, {\it Alg\`ebre locale. Multiplicit\'es, 3 \'ed.}, Lecture Notes in Mathematics, Vol.~11, Springer-Verlag, 
Berlin 1975

\bibitem{Sieb} J.~de Siebenthal, Sur les groupes de Lie compacts non connexes, Comm.~Math.~Helvetici {\bf 31} (1956), 41-89


\bibitem{Va} V.~S.~Varadarajan, {\it $Spin(7)$-subgroups of $SO(8)$ and $Spin(8)$}, Expo.~Math.~{\bf 19} (2001), 163-177

\end{thebibliography}
\end{document}